\newtheorem{thm}{Theorem}[subsection]
\newtheorem{thm0}{Theorem}
\newtheorem{prop}[thm]{Proposition}
\newtheorem{lem}[thm]{Lemma}
\newtheorem{cor}[thm]{Corollary}
\theoremstyle{definition}
\newtheorem{defin}[thm]{Definition}
\theoremstyle{remark}
\newtheorem{rem}[thm]{Remark}
\newtheorem{ex}[thm]{Example}
\numberwithin{equation}{section}
\newcommand{\x}{\times}
\newcommand{\ox}{\otimes}
\newcommand{\dt}{.}
\def\lacute{\mathopen{<}}\def\racute{\mathopen{>}}
\newcommand{\scal}[2]{{\lacute#1,#2\racute}}
\newcommand{\ensemble}[2]{\{\,#1\mid#2\,\}}
\newcommand{\C}{{\mathbb C}}
\newcommand{\R}{{\mathbb R}}
\newcommand{\CB}{\mathcal B}
\newcommand{\CaD}{\mathcal D}   
\newcommand{\CF}{\mathcal F}
\newcommand{\CI}{\mathcal I}
\newcommand{\CJ}{\mathcal J}
\newcommand{\CK}{\mathcal K}
\newcommand{\CM}{\mathcal M}
\newcommand{\CN}{\mathcal N}
\newcommand{\CO}{\mathcal O}
\newcommand{\CT}{\mathcal T}
\newcommand{\kN}{\mathfrak N}
\newcommand{\kO}{\mathfrak O}
\newcommand{\kc}{\mathfrak c}
\newcommand{\kg}{\mathfrak g}
\newcommand{\kh}{\mathfrak h}
\newcommand{\kl}{\mathfrak l}
\newcommand{\km}{\mathfrak m}
\newcommand{\kp}{\mathfrak p}
\newcommand{\kq}{\mathfrak q}
\newcommand{\ks}{\mathfrak s}
\newcommand{\kgp}{\kg_\gp}
\newcommand{\kqp}{\kq_\gp}
\newcommand{\khp}{\kh_\gp}
\newcommand{\khpo}{\khp^\bot}
\newcommand{\khpp}{(\khpo)'}
\newcommand{\sld}{\ks\kl_2}
\newcommand{\sln}{\ks\kl_n}
\newcommand{\gln}{\kg\kl_n}
\newcommand{\Gln}{Gl_n}
\newcommand{\Sln}{Sl_n}
\newcommand{\vespa}{\vspace{1em}}
\renewcommand{\setminus}{-}     
\newcommand{\dsur}[1]{\frac \partial{\partial#1} }               
\newcommand{\OO}{{{\mathcal O_\Omega}}}
\newcommand{\Dgg}{\CaD_\kg^G}
\newcommand{\CHM}{{Ch(\CM)}}
\newcommand{\ga}{\alpha}
\newcommand{\gb}{\beta}
\newcommand{\gd}{\delta}           \newcommand{\gD}{\Delta}
\newcommand{\gz}{\zeta}
\newcommand{\gh}{\eta}
\newcommand{\gth}{\theta}          
\newcommand{\gl}{\lambda}          \newcommand{\gL}{\Lambda}
\newcommand{\gx}{\xi}              
\newcommand{\gp}{\pi}              
\newcommand{\gs}{\sigma}           \newcommand{\gS}{\Sigma}
\newcommand{\gt}{\tau}
\newcommand{\gf}{\varphi}          \newcommand{\gF}{\Phi}
\newcommand{\gy}{\psi}             \newcommand{\gY}{\Psi}
           \newcommand{\gO}{\Omega}
\newcommand{\exacte}[3]{0\xrightarrow{\ \ }{#1}\xrightarrow{\ \ }{#2} \xrightarrow{\ \ }{#3} \xrightarrow{\ \ }0}
\newcommand{\Dg}{\CaD_\kg}
\newcommand{\Dgo}{\CaD_{\kg\x \gO}}
\newcommand{\OV}{\CO_V}
\newcommand{\OU}{\CO_U}
\newcommand{\DV}{\CaD_V}
\newcommand{\DU}{\CaD_U}
\newcommand{\DO}{\CaD_\gO}
\newcommand{\MF}{\CM_F}
\newcommand{\MFG}{\CM_{F,\kg}}
\newcommand{\MG}{\CM_\kg}
\newcommand{\MFP}{\CM_{F\!,\,\kp}}
\newcommand{\MP}{\CM_\kp}
\newcommand{\MFl}{\CM_\gl^{\mathcal F}}
\newcommand{\NFG}{\CN_{F,\kg}}
\begin{document}

\title{Kirillov's conjecture and $\CaD$-modules}
\author{Esther Galina
\ and Yves Laurent}

\maketitle

\thispagestyle{empty}
\
\vskip 2cm

\section*{Introduction}

Let $G=\Gln(\R)$ or $G=\Gln(\C)$ and let $P$ be the subgroup of matrices whose last row is $(0,0,
\dots,0,1)$. Kirillov \cite{KIRILLOV} made the following conjecture:

{\bf Conjecture}
{\sl If $\pi$ is an irreducible unitary representation of $G$ on a Hilbert space $H$ then
$\pi|_P$ is irreducible.}

The proof of this conjecture has a long story, we refer to the introduction of Baruch \cite{BARUCH} for details about it.
A first proof for the complex case was done by Sahi \cite{SAHI}. The complete proof, that includes the real and complex case,
 was given by Baruch \cite{BARUCH}. He uses an argument of Kirillov to show that the conjecture is an easy corollary of the following
 theorem:
\begin{thm0}\label{thm:invariant}
Let $T$ be a $P$-invariant distribution on $G$ which is an eigendistribution with respect to the
 center of the universal enveloping algebra associated with $G$. Then there exists a locally integrable
 function $f$ on $G$ which is $G$-invariant and real analytic on the regular set $G'$ such that $T=f$. In
particular $T$ is $G$-invariant.
\end{thm0}

Barush's proof of theorem \ref{thm:invariant} uses standard methods to reduce the problem to
 nilpotent points and then needs a rather long and detailed study of the nilpotent $P$-orbits of the
adjoint representation of $P$ on the Lie algebra $\kg$ of $G$.

If we replace "$P$-invariant" by "$G$-invariant" in theorem  \ref{thm:invariant}, we get a well known
result of Harish-Chandra that we proved in \cite{GL} by means of $\CaD$-modules. We defined a class of $\CaD$-modules
that we called "tame": a $\CaD$-module is tame if it satisfies a condition on the roots of a family
of polynomials, the $b$-functions (see \S \ref{sec:tame}).  The main property of these $\CaD$-modules is that their solutions
are always locally integrable. Then we proved that in the Harish-Chandra case, the distribution $T$ is solution of a $\CaD$-module,
 i.e. a system of partial differential equations, which  is tame.

In this paper, we want to prove theorem \ref{thm:invariant} by the same method. In fact our proof will
 be simple as we will not have to calculate the roots of the $b$-functions as in \cite{GL} but
use only geometric considerations on the characteristic variety of the $\CaD$-module.
We don't need neither a concrete characterization of nilpotent $P$-orbits in $\kg$, we only use the stratification
of $\kg$ in $G$-orbits and the parametrization by the dimension of $P$-orbits in a single $G$-orbit.

Our theorem is purely complex, its is a result for $\CaD$-modules on $\Gln(\C)$. So it gives results
for distributions on any real form of $\Gln(\C)$. In the real form is $\Gln(\R)$ or $\Gln(\C)$ it gives
theorem \ref{thm:invariant}. For other real forms it gives a result on distributions which are not characterized
by the action of a group $P$ and does not seem to have an easy interpretation.

From the theorem with $G=\Gln(\C)$ we deduce easily the same theorem for $G=\Sln(\C)$ and $P$ a maximal parabolic subgroup. This
 gives the analog of theorem \ref{thm:invariant} for $\Sln(\C)$ and $\Sln(\R)$.

In section 1, we recall the definition of tame $\CaD$-modules and we define precisely the modules $\MFP$ that we want to consider. Then in section 1.3.
we state our main results. In section 2, we study the very simple but illuminating case of $\sld$.

In section 3, we prove general theorems on $\CaD$-modules defined on semi-simple Lie groups which will be used later to reduce
the dimension of the Lie algebra. Then we give the proof of the main results in section 4.

\section{Notations and definitions.}

\subsection{Tame $\CaD$-modules.}\label{sec:tame}

Let $\gO$ be a complex analytic manifold. We denote by $\OO$ the sheaf of holomorphic functions on $\gO$
and by $\DO$ the sheaf of differential operators on $\gO$ with coefficients in $\OO$. If $(x_1,
\dots,x_n)$ are local coordinates for $\gO$, we denote by $D_{x_i}$ the derivation $\dsur{x_i}$. We refer
to \cite{BJORK} for the theory of $\DO$-modules.

In this paper, we will consider coherent cyclic $\CaD$-modules that is $\CaD$-modules $\CM=\DO/\CI$ quotient of $\DO$ by  a locally finite ideal $\CI$o f $\DO$. Then the characteristic variety of $\CM$ is the subvariety of $T^*\gO$
defined by the principal symbols of the operators in $\CI$.

A $\DO$-module is said to be holonomic if its characteristic variety $\CHM$ has dimension $n=\dim\gO$. Then $\CHM$ is
homogeneous lagrangian and there exists a stratification $\gO=\bigcup \gO_\ga$ such
that $\CHM\subset\bigcup_\ga \overline{T^*_{\gO_\ga}\gO}$ \cite[Ch. 5]{KASHCOUR}.

Here a stratification of a manifold $\gO$ is a {\sl locally finite} union $\gO=\bigcup_\ga \gO_\ga$ such that
\begin{itemize}
\item For each $\ga$, $\overline{\gO}_\ga$ is an analytic subset
  of $\gO$ and $\gO_\ga$ is its regular part.
\item $\gO_\ga\cap \gO_\gb =\emptyset$ for $\ga\neq \gb$.
\item If $\overline{\gO}_\ga\cap \gO_\gb \neq \emptyset$ then
$\overline{\gO}_\ga\supset \gO_\gb$.
\end{itemize}
Let $Z$ be a submanifold of $\gO$ given in coordinates by $Z=\ensemble{(x,t)\in\gO}{t_1=\dots=t_p=0}$.
The polynomial $b$ is {\sl a $b$-function for $\CM$ along $Z$} if there exists in the ideal $\CI$ an
equation $b(\gth)+Q(x,t,D_x,D_t)$ where $\gth=t_1D_{t_1}+\dots+t_pD_{t_p}$ and $Q$ is of degree $-1$ for
the $V$-filtration. This means that $Q$ may be written as $\sum_it_iQ_i(x,t,D_x,[t_kD_{t_j}])$. This $b$-function is said to be {\sl tame}
if the roots of the polynomial $b$ are strictly greater than $-p$.

A more precise and intrinsic definition is given in
\cite{GL}  and \cite{RAMISCONF}, the definition is also extended to "quasi" or "weighted" $b$-functions" where $\gth$ is replaced by
$n_1t_1D_{t_1}+\dots+n_pt_pD_{t_p}$ for integers $(n_1\dots,n_p)$. In the definition of {\sl tame} the codimension $p$ of $Z$ is
replaced by $\sum n_i$. As this definition will not be explicitly used here, we refer to \cite{GL} for the details.

\begin{defin}\cite{GL}\label{def:quasitame}
The cyclic holonomic $\DO$-module $\CM$ is \textsl{tame} if there is a stratification
$\gO=\bigcup \gO_\ga$  such that $\CHM\subset \bigcup_\ga \overline{T^*_{\gO_\ga}\gO}$ and, for
each $\ga$, $\gO_\ga$ is open in $\gO$ or there is a tame quasi-$b$-function associated to $\gO_\ga$.
\end{defin}
The definition extends as follows:
\begin{defin}\cite{GL}\label{def:wtame}
The cyclic holonomic $\DO$-module $\CM$ is \textsl{weakly tame} if there is a stratification
$\gO=\bigcup \gO_\ga$  such that $\CHM\subset \bigcup_\ga \overline{T^*_{\gO_\ga}\gO}$ and, for
each $\ga$ one of the following is true:

(i) $\gO_\ga$ is open in $\gO$,

(ii) there is a tame quasi-$b$-function associated with $\gO_\ga$,

(iii) no fiber of the conormal bundle $T^*_{\gO_\ga}\gO$ is contained in $\CHM$.
\end{defin}

In (iii), the fibers of $T^*_{\gO_\ga}\gO$ are relative to the projection $\gp:T^*\gO\to\gO$. When $\gO_\ga$ is invariant under the action of a group compatible with the $\CaD$-module structure - which will be the case here, (iii) is equivalent to:

(iii)'  $T^*_{\gO_\ga}\gO$ is not contained in $\CHM$.

The following property of a weakly tame $\DO$-module has been proved in \cite{GL}:
\begin{thm}\label{thm:support}
If the holonomic $\DO$-module $\CM$ is weakly tame it has no quotient with support in a hypersurface of $\gO$.
\end{thm}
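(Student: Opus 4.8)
The plan is to argue by contradiction: use the hypothesis on $\CHM$ to pin down a stratum of the stratification that must carry a tame quasi-$b$-function, and then to show that such a $b$-function is incompatible with a non-zero quotient of $\CM$ supported on that stratum. So suppose $\CM=\DO/\CI$ admits a surjection $\gf\colon\CM\to\CN$ with $\CN\ne0$ and $\operatorname{supp}\CN$ contained in a hypersurface of $\gO$. Being a quotient of a holonomic module, $\CN$ is holonomic, and a good filtration of $\CM$ induces one on $\CN$ with $\operatorname{gr}\CN$ a quotient of $\operatorname{gr}\CM$, whence $Ch(\CN)\subset\CHM$. Choose an irreducible component $W$ of $\operatorname{supp}\CN$; as $W$ lies in a hypersurface, $\codim W=p\ge1$. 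Pick $x\in W$ a smooth point of $W$ lying on no other component of $\operatorname{supp}\CN$; on a small neighbourhood of $x$ we may assume that $W$ is a closed connected submanifold with $\operatorname{supp}\CN=W$. Take coordinates $(y_1,\dots,y_{n-p},t_1,\dots,t_p)$ with $W=\{t_1=\dots=t_p=0\}$. By Kashiwara's equivalence \cite{KASHCOUR}, $\CN=\bigoplus_{\gb\in\mathbb N^p}D_t^\gb\CN_0$, where $D_t^\gb=D_{t_1}^{\gb_1}\cdots D_{t_p}^{\gb_p}$, the module $\CN_0=\{\,s\in\CN\mid t_1s=\dots=t_ps=0\,\}$ is a non-zero coherent $\CaD_W$-module, the sum is a direct sum, and each $D_t^\gb$ is injective on $\CN_0$.

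Since $\CN\ne0$ is supported by the smooth submanifold $W$, its characteristic variety contains the conormal bundle $T^*_W\gO$; hence near $x$ one has $T^*_W\gO\subset Ch(\CN)\subset\CHM\subset\bigcup_\ga\overline{T^*_{\gO_\ga}\gO}$, the last inclusion by Definition~\ref{def:wtame}. As $\CHM$ is a finite union of conic Lagrangian varieties of dimension $n=\dim\gO$ and $T^*_W\gO$ is irreducible of dimension $n$, near $x$ it coincides with one irreducible component of $\CHM$, hence with one $\overline{T^*_{\gO_\ga}\gO}$; applying $\gp$ gives $W=\gO_\ga$ near $x$. Therefore $\gO_\ga$ is not open in $\gO$, and $T^*_{\gO_\ga}\gO\subset\CHM$ near $x$, so fibres of $T^*_{\gO_\ga}\gO$ are contained in $\CHM$. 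Thus alternatives (i) and (iii) of Definition~\ref{def:wtame} are excluded for $\gO_\ga$, and weak tameness leaves only (ii): a tame quasi-$b$-function is attached to $\gO_\ga$. In our coordinates this yields an operator $b(\gth)+Q\in\CI$ with $\gth=n_1t_1D_{t_1}+\dots+n_pt_pD_{t_p}$ for integers $n_i\ge1$, with $Q$ of $V$-degree $-1$, and with every root of $b$ strictly greater than $-(n_1+\dots+n_p)$. The one property of $Q$ used below is that, acting on a $\CaD$-module supported by $\{t_1=\dots=t_p=0\}$, it carries each $\gth$-eigenspace strictly into eigenspaces of larger eigenvalue; for $n_1=\dots=n_p=1$ this is immediate from the form $Q=\sum_i t_iQ_i$ with $Q_i\in V_0\DO$ recalled in \S\ref{sec:tame}, and the general case follows from the definitions in \cite{GL}.

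Now let $u$ be the canonical generator of $\CM=\DO/\CI$ and $v=\gf(u)$; then $v$ generates $\CN$, so $v\ne0$, and $b(\gth)v+Qv=0$ in $\CN$. On the summand $D_t^\gb\CN_0$ the operator $\gth$ acts by the scalar $-\sum_i n_i(\gb_i+1)$, so $\gth$ acts semisimply on $\CN$ with these eigenvalues. Write $v=\sum_\gb D_t^\gb v_\gb$ with $v_\gb\in\CN_0$, and let $-\mu$ be the least eigenvalue of $\gth$ occurring in $v$, i.e. $\mu=\max\{\,\sum_i n_i(\gb_i+1)\mid v_\gb\ne0\,\}\ge n_1+\dots+n_p$. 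Compare the $(-\mu)$-eigencomponents of $b(\gth)v=-Qv$: by the property of $Q$ just recalled, the right-hand side has no $(-\mu)$-component, while the left-hand side has $b(-\mu)\,v_{-\mu}$, where $v_{-\mu}=\sum_{\sum_i n_i(\gb_i+1)=\mu}D_t^\gb v_\gb$. This $v_{-\mu}$ is non-zero, since the $D_t^\gb\CN_0$ lie in direct sum, each $D_t^\gb$ is injective on $\CN_0$, and at least one $v_\gb$ with $\sum_i n_i(\gb_i+1)=\mu$ is non-zero. Hence $b(-\mu)=0$, so $b$ has the root $-\mu\le-(n_1+\dots+n_p)$, contradicting its tameness. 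This contradiction proves that $\CN=0$, as claimed.

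The step I expect to be the main obstacle is the one just used: verifying, from the intrinsic definition of (quasi-)$b$-function in \cite{GL}, that an operator of $V$-degree $-1$ really \emph{raises} the $\gth$-eigenvalue on a $\CaD$-module supported by $\{t=0\}$. This is a short computation once the sign conventions are fixed, but it is the arithmetic heart of the argument; the geometric input — the characteristic-variety argument of the second paragraph that singles out the stratum $\gO_\ga$ with conormal $T^*_W\gO$ — is what makes the weak-tameness hypothesis applicable.
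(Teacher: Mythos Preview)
The paper does not prove Theorem~\ref{thm:support}; it merely records it and cites \cite{GL} for the proof. So there is no in-paper argument to compare against.

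Your argument is correct and is essentially the standard one. The two ingredients are exactly the ones you isolate: (a) a quotient $\CN$ supported on a hypersurface forces, via $Ch(\CN)\subset\CHM$ and irreducibility of $T^*_W\gO$, that some stratum $\gO_\ga$ agrees locally with a smooth component $W$ of $\operatorname{supp}\CN$ and has its full conormal inside $\CHM$, so alternative~(ii) of Definition~\ref{def:wtame} must hold; and (b) Kashiwara's structure theorem for modules supported on a smooth submanifold turns the equation $b(\gth)v=-Qv$ into a statement about $\gth$-eigenvalues, giving a root $-\mu\le -\sum n_i$ of $b$ and contradicting tameness. Your handling of both steps is clean; the identification $W=\gO_\ga$ near $x$ via equality of irreducible $n$-dimensional Lagrangians is the right way to pin down the stratum.

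On the point you flag yourself: in the unweighted case $n_1=\dots=n_p=1$ your computation that $Q\in V_{-1}$ strictly raises $\gth$-eigenvalues on a module supported in $\{t=0\}$ is correct, since $t_i$ sends $D_t^\gb\CN_0$ to $D_t^{\gb-e_i}\CN_0$ (or to $0$) and the $V_0$-factors $Q_i$ preserve the grading. For the weighted case you defer to \cite{GL}; this is appropriate, since the paper itself defers the precise definition of quasi-$b$-functions to \cite{GL} and \cite{RAMISCONF}, and the compatibility of the weighted $V$-filtration with the $\gth$-grading is exactly what those definitions are set up to ensure.
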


If $\gL$ is a real analytic manifold and $\gO$ its complexification, we also proved:
\begin{thm}\label{thm:lun}
Let $\CM$ be a holonomic weakly tame $\DO$-module, then $\CM$ has no distribution solution on $\gL$ with support in a hypersurface.
\end{thm}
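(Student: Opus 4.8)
The plan is to deduce Theorem~\ref{thm:lun} from Theorem~\ref{thm:support}: a distribution solution of $\CM$ supported in a hypersurface will be turned, locally on $\gO$, into a quotient of $\CM$ supported in a complex hypersurface, which Theorem~\ref{thm:support} then forces to vanish. Write $\CM=\DO/\CI$ and let $u$ be a section of the $\DO|_\gL$-module $\CaD b_\gL$ of distributions on $\gL$ with $\CI u=0$ and $\operatorname{supp} u$ contained in a real analytic hypersurface $S$ of $\gL$; we must prove $u=0$. Put $S_0=S$ and $S_{i+1}=(S_i)_{\mathrm{sing}}$, so each $S_i$ is a closed real analytic subset of $\gL$ with $\dim S_{i+1}<\dim S_i$, hence $S_N=\emptyset$ for some $N$, and each regular part $(S_i)_{\mathrm{reg}}$ is a smooth real analytic submanifold. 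I claim $\operatorname{supp} u\subset S_i$ for every $i$: this is clear for $i=0$, and if it holds for $i$ then, by the local argument of the next two paragraphs, $u$ vanishes near every point of $(S_i)_{\mathrm{reg}}$, whence $\operatorname{supp} u\cap(S_i)_{\mathrm{reg}}=\emptyset$ and $\operatorname{supp} u\subset S_i\setminus(S_i)_{\mathrm{reg}}=S_{i+1}$. Taking $i=N$ yields $\operatorname{supp} u\subset S_N=\emptyset$.

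So fix $i$ with $\operatorname{supp} u\subset S_i$ and a point $x_0\in(S_i)_{\mathrm{reg}}$; we show $u=0$ near $x_0$. Near $x_0$ one has $S_i=\{g_1=\dots=g_k=0\}$ with the $g_j$ real analytic and the $dg_j$ linearly independent; complexifying $g_1$ to a holomorphic function $\tilde g_1$ on a neighbourhood $U$ of $x_0$ in $\gO$, the set $Y=\{\tilde g_1=0\}$ is a smooth complex hypersurface of $U$ with $S_i\cap U\subset Y$. Pick holomorphic coordinates $(t,x')$ on $U$ with $Y=\{t=0\}$. Then $\operatorname{supp} u$ is, near $x_0$, contained in the smooth real hypersurface $Y\cap\gL$ of $\gL$, on which $t$ restricts to a defining function; by the classical structure of a distribution supported in a smooth hypersurface, $u=\sum_{0\le\ell\le m}u_\ell(x')\,\gd^{(\ell)}(t)$ near $x_0$, and in particular $t^{m+1}u=0$ (recall that $t$ acts on $\CaD b_\gL$ through its restriction to $\gL$).

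Now consider on $U$ the cyclic $\DO$-module $\CN:=\DO/(\CI+\DO\,t^{m+1})$. It is a quotient of $\CM|_U$, and it is supported in the hypersurface $\{t=0\}$, since $t^{m+1}$ lies in its defining ideal and is invertible outside $\{t=0\}$. Restricting (and if necessary refining) the stratification of $\gO$ to $U$ shows that $\CM|_U$ is again holonomic and weakly tame, so Theorem~\ref{thm:support} forces $\CN=0$, i.e.\ $\CI|_U+\DO|_U\,t^{m+1}=\DO|_U$. Hence near $x_0$ we may write $1=P+Q\,t^{m+1}$ with $P\in\CI$ and $Q\in\DO$; applying this to $u$ and using $\CI u=0$ and $t^{m+1}u=0$ gives $u=Pu+Q(t^{m+1}u)=0$ near $x_0$, as needed.

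The substance of the proof lies entirely in Theorem~\ref{thm:support}, which is used as a black box; what makes the reduction work is the observation that the classical description of a distribution carried by a smooth hypersurface supplies the annihilating operator $t^{m+1}$, and it is precisely this operator that places the cyclic quotient $\CN$ under the hypothesis of Theorem~\ref{thm:support}. The only step requiring care is the stratification in the first paragraph: one must remove singular strata so that at each stage $\operatorname{supp} u$ sits inside a \emph{smooth} real analytic submanifold, whose local complexification then lies inside a smooth complex hypersurface; that $\tilde g_1$ is defined only near $x_0$ causes no trouble, since we only ever need the local conclusion that $u$ vanishes near a given point.
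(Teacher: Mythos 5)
Your proposal is correct, and since the paper does not reproduce a proof of Theorem~\ref{thm:lun} (it is cited from \cite{GL}), I can only judge it on its own terms: the reduction to Theorem~\ref{thm:support} is exactly the natural route, and the key device --- use the Schwartz structure theorem for a distribution carried by a smooth hypersurface to produce the annihilator $t^{m+1}$, then feed the cyclic quotient $\CN=\DO/(\CI+\DO t^{m+1})$, supported in $\{t=0\}$, into Theorem~\ref{thm:support} --- is correct and complete. Two small points deserve a word. First, the induction hinges on the fact that for a real analytic set $S_i$ the singular locus is again closed of strictly smaller dimension; this is true but not entirely painless for real (as opposed to complex) analytic sets, and it is slightly cleaner to invoke a semianalytic Whitney stratification of $S$ into smooth strata and induct on the dimension of strata --- the local argument at a point of a stratum is then verbatim yours. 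Second, you should make explicit (even in one line) that weak tameness passes to open subsets: the stratification $\{\gO_\ga\}$ restricts to a stratification $\{\gO_\ga\cap U\}$ of $U$, the inclusion $\CHM\subset\bigcup\overline{T^*_{\gO_\ga}\gO}$ restricts, and a tame quasi-$b$-function along $\gO_\ga$ restricts to one along $\gO_\ga\cap U$, so $\CM|_U$ is again holonomic and weakly tame and Theorem~\ref{thm:support} applies on $U$. With those two remarks in place the argument is airtight.
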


We proved that under some additional conditions, the distribution solutions of a tame holonomic $\CaD$-module are locally
integerable that is in $L^1_{loc}$.

\subsection{$\CaD$-modules associated to the adjoint action.}\label{sec:adj}

Let $G$ be a complex reductive Lie group, $P$ a Lie subgroup, $\kg$ and $\kp$ their Lie algebras.

The differential of the adjoint action of $G$ on $\kg$ defines a  morphism of Lie algebra $\gt$ from $\kg$ to $\mathrm{Der}\CO[\kg]$ the Lie algebra of derivations on  $\CO[\kg]$ by:
\begin{equation}\label{formul1}
\left(\gt(Z)f\right)(X)=\frac d{dt}f\left(\exp(-tZ).X\right)|_{t=0} \quad\mathrm{for}\quad Z,X\in\kg, f\in \CO[\kg]
\end{equation}
i.e. $\gt(Z)$ is the vector field on $\kg$ whose value at $X\in\kg$ is $[X,Z]$. We denote by $\gt(\kg)$
the set of all vector fields $\gt(Z)$ for $Z\in\kg$. It is the set of vector fields on $\kg$ tangent to
the orbits of the adjoint action of $G$ on $\kg$. In the same way, $\gt(\kp)$ is the set of all vector
fields $\gt(Z)$ for $Z\in\kp$ and is the set of vector fields on $\kg$ tangent to the orbits of  $P$
acting on $\kg$.

The group $G$ acts on $\kg^*$, the dual of $\kg$. The space $\CO[\kg^*]$ of
polynomials on $\kg^*$ is identified with the symmetric algebra $S(\kg)$.
We denote by $\CO[\kg^*]^G=S(\kg)^G$ the space of invariant
polynomials on $\kg^*$ and by $\CO_+[\kg^*]^G=S_+(\kg)^G$ the subspace of polynomials vanishing at $\{0\}$. The
common roots of the polynomials in $\CO_+[\kg^*]^G$ are the nilpotent elements of $\kg^*$. 

 Let $\Dgg$ be the sheaf of differential operators on $\kg$ invariant under the adjoint action of $G$.
The principal symbol $\gs(R)$ of such an operator $R$ is a function on $T^*\kg=\kg\x\kg^*$ invariant
under the action of $G$.  If $F$ is a subsheaf of $\Dgg$, we denote by
$\gs(F)$ the sheaf of the principal symbols of all elements of $F$.

\begin{defin}\cite{RAMISCONF} \label{hctype}
A subsheaf $F$ of $\Dgg$ is of {\sl(H-C)-type} if $\gs(F)$ contains a power of $\CO_+[\kg^*]^G$ considered as
a subring of  $\CO_+[\kg\x\kg^*]^G$. A {\sl(H-C)-type $\Dg$-module} is the quotient $\MF$ of $\Dg$ by the
ideal $\CI_F$ generated by $\gt(\kg)$ and  by a subsheaf $F$ of (H-C)-type.
\end{defin}

As described in \cite[Examples 2.1.3. and 2.1.4]{RAMISCONF}, there are two main examples of (H-C)-type $\Dg$-module:

\begin{ex}
An element $A$ of $\kg$ defines a vector field with constant coefficients on $\kg$ by:
$$(A(D_x)f)(x)=\frac d{dt}f(x+tA)|_{t=0}\quad\mathrm{for}\quad f\in S(\kg^*), x\in\kg$$

By multiplication, this extends to an injective morphism from the symmetric algebra $S(\kg)$ to the algebra of
differential operators with constant coefficients on $\kg$; we will identify $S(\kg)$ with its
image and denote by $P(D_x)$ the image of $P\in S(\kg)$. If $F$ is a finite codimensional ideal of
$S(\kg)^G$, its graded ideal contains a power of $S_+(\kg)^G$ hence when it is identified to a set
of differential operators with constant coefficients, $F$ is a subsheaf of $\Dg$ of (H-C)-type and
$\MF$ is a $\Dg$-module of (H-C)-type.

If $\gl\in\kg^*$, the module $\MFl$ defined by Hotta and
Kashiwara \cite{HOTTA} is the special case where $F$ is the set of polynomials $Q-Q(\gl)$ for $Q\in
S(\kg)^G$.
\end{ex}

\begin{ex}\label{ex2}
The enveloping algebra $U(\kg)$ is the algebra of left invariant differential operators
on $G$. It is filtered by the order of operators and the associated graded algebra is
isomorphic by the symbol map to $S(\kg)$. This map is a $G$-map and defines a morphism
from the space of bi-invariant operators on $G$ to the space $S(\kg)^G$. This map is a
linear isomorphism, its inverse is given by a symmetrization morphism \cite[Theorem
3.3.4.]{VARADA}. Then, through the exponentional map a
bi-invariant operator $P$ defines a differential operator $\widetilde P$ on the Lie
algebra $\kg$ which is invariant under the adjoint action of $G$ (because the exponential
intertwines the adjoint action on the group and on the algebra) and the principal symbol
$\gs(\widetilde P)$ is equal to $\gs(P)$.

An eigendistribution $T$ is a distribution on an open subset of $G$ which is an eigenvector for all
bi-invariant operators $Q$ on $G$, that is satisfies $QT=\gl T$ for some $\gl$ in $\C$.

Let $U$ be an open subset of $\kg$ where the exponential is injective and $U_G=\exp(U)$. Let $T$ be
an invariant eigendistribution on $U_G$ and $\widetilde T$ the distribution on $U$ given by $\scal
T \gf = \scal{\widetilde T}{\gf_o\exp}$. As $T$ is invariant and eigenvalue of all bi-invariant
operators, $\widetilde T$ is solution of an (H-C)-type $\Dg$-module.
\end{ex}

In this paper, we fix a (H-C)-type subsheaf $F$ of $\Dgg$. We denote by $\MFG$  the
quotient of $\Dg$ by the ideal $\CI_F$ generated by
$\gt(\kg)$ and $F$. We denote by $\MFP$  the quotient of $\Dg$ by the ideal $\CJ_F$
generated by $\gt(\kp)$ and $F$. We have a canonical surjective morphism whose kernel will be denoted by $\CK_\kp$:
\begin{equation}\label{equ:morph}
\exacte{\CK_\kp}{\MFP}{\MFG}
\end{equation}

By example \ref{ex2}, the distribution of theorem \ref{thm:invariant} is solution of such  a module $\MFP$ (modulo transfer by
the exponential map).

The Killing form is a non-degenerate invariant bilinear form on the semi-simple Lie algebra $[\kg,\kg]$
satisfying $B([X,Z],Y)=B([X,Y],Z)$  We extend
it to a non-degenerate invariant bilinear form on $\kg$. This defines an isomorphism between $\kg$ and
its dual $\kg^*$.

The cotangent bundle to $\kg$ is equal to $\kg\x\kg^*$ identified to $\kg\x\kg$ by means of the Killing
form. Then it is known \cite[Prop 4.8.3.]{HOTTA} that if $\kN$ is the nilpotent cone of $\kg$, the
characteristic variety of $\MFG$ is equal to
\begin{equation}\label{equ:car0}
\ensemble{(X,Y)\in\kg\times\kg}{Y\in \kN, [X,Y]=0}
\end{equation}

In the same way:
\begin{lem}\label{lema}
The characteristic variety of $\MFP$ is contained in
\begin{equation}\label{equ:cark}
\ensemble{(X,Y)\in\kg\times\kg}{Y\in \kN, [X,Y]\in\kp^\bot}
\end{equation}
\end{lem}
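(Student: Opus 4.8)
The plan is to compute the characteristic variety of $\MFP$ directly from its defining ideal $\CJ_F$, which is generated by the vector fields $\gt(\kp)$ together with the (H-C)-type subsheaf $F$. The characteristic variety is cut out by the principal symbols of the generators (and of the elements of $\CJ_F$ they generate), so I would analyze the symbols of these two families separately and intersect the resulting conditions.

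**First** I would handle the contribution of $F$. Since $F$ is of (H-C)-type, $\gs(F)$ contains a power of $\CO_+[\kg^*]^G$ viewed inside $\CO_+[\kg\x\kg^*]^G$. Under the identification $T^*\kg \cong \kg\x\kg$ via the Killing form, vanishing of all elements of $\CO_+[\kg^*]^G$ at the covector $Y$ is exactly the statement that $Y$ lies in the nilpotent cone $\kN$ (this is recalled in the excerpt: the common roots of $\CO_+[\kg^*]^G$ are the nilpotent elements). Hence the symbol conditions coming from $F$ force $Y\in\kN$, precisely as in the $\MFG$ case treated in \cite[Prop 4.8.3.]{HOTTA}.

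**Next** I would handle the vector fields $\gt(Z)$ for $Z\in\kp$. By \eqref{formul1}, $\gt(Z)$ is the vector field on $\kg$ whose value at $X$ is $[X,Z]$; writing it in linear coordinates on $\kg$, its principal symbol at the point $(X,Y)\in\kg\x\kg^* \cong \kg\x\kg$ is the pairing $\scal{Y}{[X,Z]}$, i.e. $B([X,Z],Z')$-type expressions — more precisely the symbol evaluated against $Y$ gives $\langle [X,Z], Y\rangle$ where $\langle\,,\rangle$ is the Killing pairing. Using invariance of the Killing form, $\langle [X,Z],Y\rangle = \langle Z,[Y,X]\rangle$. Requiring this to vanish for \emph{all} $Z\in\kp$ says exactly that $[Y,X]$ is orthogonal to $\kp$ under the Killing form, i.e. $[X,Y]\in\kp^\bot$ (up to sign, $[X,Y]=-[Y,X]$ and $\kp^\bot$ is a linear subspace). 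Combining with $Y\in\kN$ from the previous step gives the inclusion of $\CHM[\MFP]$ in the set \eqref{equ:cark}. I would stress that this is only an inclusion (not equality) because the symbols of the generators only generate a subideal of the full symbol ideal, so the characteristic variety could a priori be smaller — which is why the lemma is stated with $\subset$.

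**The main obstacle** I anticipate is bookkeeping with the Killing-form identification $\kg^*\cong\kg$ and making sure the symbol of the first-order operator $\gt(Z)$ is correctly paired with the cotangent variable $Y$, including getting the right sign so that the condition reads $[X,Y]\in\kp^\bot$ rather than something involving $\kp$ itself; one must also be slightly careful that $\kp$ is only a subalgebra, not an ideal, so $\kp^\bot$ need not be $\gt(\kp)$-stable, but this does not affect the computation of the symbol variety. A secondary point is to note that the inclusion $\kp\subset\kg$ gives $\CJ_F\subset\CI_F$, hence $\CHM[\MFP]\supset\CHM[\MFG]$, which is consistent since \eqref{equ:car0} is contained in \eqref{equ:cark} (as $\kp^\bot\ni 0$); this provides a useful sanity check on the sign conventions.
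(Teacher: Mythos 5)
Your proposal is correct and follows essentially the same route as the paper: compute the symbol contributions of $F$ (giving $Y\in\kN$ after the Killing identification, exactly as in the $\MFG$ case) and of $\gt(\kp)$ (whose symbol at $(X,Y)$ is $B([X,Z],Y)$), then use the invariance identity $B([X,Z],Y)=B([X,Y],Z)$ to rewrite the vanishing condition over $Z\in\kp$ as $[X,Y]\in\kp^\bot$. Your remarks that only an inclusion can be claimed, and the sanity check against the $\MFG$ characteristic variety, are sound but not needed beyond what the paper already does.
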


\begin{proof}
Let us first consider that variety as a subset of  $\kg\x\kg^*$. The characteristic variety of $\MFP$ is
contained in the variety defined by $F$ that is the nilpotent cone of $\kg^*$. On the other hand, it is
contained in the variety defined by $\gt(\kp)$ that is $$\ensemble{(X,\gx)\in\kg\times\kg^*}{\forall
Z\in\kp \quad\scal{[X,Z]}{\gx}=0}$$

 The isomorphism defined by the Killing form exchanges the nilpotent cone of $\kg$ and that of $\kg^*$, hence
 after this isomorphism the characteristic variety is a subset of $\kg\x\kg$ contained in
$$\ensemble{(X,Y)\in\kg\times\kg}{Y\in \kN, \forall Z\in\kp \quad B([X,Z],Y)=0}$$
But we have $B([X,Z],Y)=B([X,Y],Z)$ which gives the result.
\end{proof}

\begin{rem}
Using theorem \ref{thm:reg}, it is not difficult to show that the characteristic variety of $\MFP$ is in fact
\textsl{equal} to the set (\ref{equ:cark}).
\end{rem}

The variety (\ref{equ:car0}) is lagrangian \cite{HOTTA} hence the module $\MFG$ is always holonomic but
in general the variety (\ref{equ:cark}) is not lagrangian and $\MFP$ is not holonomic. We will see that
it is the case when $G=\Gln(\C)$ and $P$ is the set of matrices fixing a non zero vector in $\C^n$, or $G=\Sln(\C)$
and $P$ a maximal parabolic group.

\subsection{Main Result}

To state the main results, we restrict 
to the following cases:
\begin{itemize}
\item $G$ is the group $\Gln(\C)$ acting on $\C^n$ by the usual action and $P$ is the stability subgroup
of $G$ at $v_0\in\C^n$, that is $P=\ensemble{g\in G}{g.v_0=v_0}$.
\item $G$ is the group $Sl_n(\C)$ acting on the projective space $\P_{n-1}(\C)$ and $P$ is a
maximal parabolic subgroup, that is the stability group of a point in  $\P_{n-1}(\C)$.
\item $G$ is a product of several groups $\Gln(\C)$ and  $Sl_n(\C)$ and $P$ is the
corresponding stability group.
\end{itemize}

In the first two cases, all subgroups $P$ are conjugated (except the trivial case $v_0=0$). The third case
will be useful during the proof.

Let $\kg$ and $\kp$ be the Lie algebras of $G$ and $P$. Our main result which will be proved in \S
\ref{sec:proofmain} is the following:

\begin{thm}\label{thm:main}
For any subsheaf $F$ of $\Dgg$ of (H-C)-type, the $\Dg$-module $\MFP$ is holonomic and weakly tame.
\end{thm}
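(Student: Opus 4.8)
The plan is to establish the two assertions separately, handling holonomicity first by an explicit dimension count on the characteristic variety, and then weak tameness by a stratification argument organized around the $G$-orbit stratification of $\kg$. For holonomicity, I would work with the variety $\Lambda_\kp = \ensemble{(X,Y)\in\kg\x\kg}{Y\in\kN,\ [X,Y]\in\kp^\bot}$ of Lemma \ref{lema}, which contains $\CHM[\MFP]$. Since $\dim\kg = n^2$ (resp.\ $n^2-1$ in the $\Sln$ case) I must show $\dim\Lambda_\kp \le n^2$. Fix a nilpotent $Y\in\kN$; the fiber over $Y$ is $\ensemble{X}{[X,Y]\in\kp^\bot}$, which is the preimage under $\ad Y\colon\kg\to\kg$ of the subspace $\kp^\bot$. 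Its dimension is $\dim\ker(\ad Y) + \dim(\kp^\bot \cap \mathrm{im}(\ad Y)) \le \dim\kg^Y + \dim\kp^\bot$, where $\kg^Y$ is the centralizer of $Y$. Now $\dim\kp^\bot = \dim\kg - \dim\kp = n$ (for $\Gln$, since $\kp$ is the stabilizer of $v_0$ and has codimension $n$) or $n-1$ (for $\Sln$ acting on $\P_{n-1}$). Summing over a nilpotent orbit $\CO$ of dimension $d$, the contribution is at most $d + (\dim\kg - d) + \dim\kp^\bot = \dim\kg + \dim\kp^\bot$, which overshoots; so the crude bound is not enough, and the point where I expect the real work is showing that the generic fiber dimension drops because $\mathrm{im}(\ad Y)$ is the orthogonal of $\kg^Y$ and $\kp^\bot$ meets it in the expected codimension. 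Concretely, I would use $\mathrm{im}(\ad Y) = (\kg^Y)^\bot$, so $\kp^\bot\cap(\kg^Y)^\bot = (\kp + \kg^Y)^\bot$, of dimension $\dim\kg - \dim(\kp + \kg^Y)$; hence the fiber over $Y$ has dimension $\dim\kg^Y + \dim\kg - \dim(\kp+\kg^Y)$, and the total dimension over the orbit $G\cdot Y$ of $Y$ is $\dim(G\cdot Y) + \dim\kg^Y + \dim\kg - \dim(\kp+\kg^Y) = \dim\kg + (\dim\kg - \dim(\kp + \kg^Y))$. So holonomicity reduces to the inequality $\dim(\kp + \kg^Y) \ge \dim\kg$ for every nilpotent $Y$, i.e.\ $\kp + \kg^Y = \kg$, equivalently $\kp^\bot\cap\kg^Y$ is contained in $\kg^Y$ appropriately — this is the combinatorial/geometric heart and should follow from the concrete description of $\kp$ (stabilizer of a line or vector) together with the classification of $\ad$-nilpotent centralizers in $\gln$, or more cleanly from the $\Sln$-equivariant geometry of $\P_{n-1}$: the $P$-orbits inside a single $G$-orbit are parametrized, and I only need the maximal $P$-orbit in each $G$-orbit to be open, which the excerpt explicitly says is all we use.

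For weak tameness, I would produce a stratification $\kg = \bigcup_\alpha \kg_\alpha$ refining the $G$-orbit stratification of $\kg$, further subdivided so that on each stratum the dimension of the $P$-orbits is constant; this is exactly the parametrization mentioned in the introduction. The characteristic variety $\CHM[\MFP]\subset\Lambda_\kp$ is then contained in $\bigcup_\alpha \overline{T^*_{\kg_\alpha}\kg}$ by the previous dimension analysis (each irreducible component of $\Lambda_\kp$ of dimension $n^2$ must be a conormal closure). For each stratum I must verify one of (i), (ii), (iii) of Definition \ref{def:wtame}. The open stratum (the regular semisimple set, or more precisely the locus where $\kp$ acts with open orbit) gives (i). For the remaining strata I would invoke Theorem \ref{thm:reg} (referenced in the remark after Lemma \ref{lema}) together with the reduction theorems of Section 3 — restricting to a slice transverse to a $G$-orbit reduces the dimension of the ambient Lie algebra, letting me argue by induction on $n$; the base case $\sld$ is the "very simple but illuminating" computation of Section 2. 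The crucial dichotomy is: along a stratum $\kg_\alpha$, either the conormal $T^*_{\kg_\alpha}\kg$ fails to be contained in $\CHM[\MFP]$ — giving (iii)$'$, which holds precisely when the $P$-orbits inside $\kg_\alpha$ are strictly smaller than the ambient transverse dimension allows — or, when it is contained, the module looks locally like (a twist of) the Harish-Chandra module $\MFG$ restricted suitably, for which a tame quasi-$b$-function already exists by the results of \cite{GL}, giving (ii).

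I expect the main obstacle to be the bookkeeping in the reduction step: showing that restricting $\MFP$ to a transverse slice at a point of a lower-dimensional $G$-orbit again yields a module of the form $\CM_{F',\kp'}$ for the reductive pair obtained from the centralizer, so that induction on $\dim\kg$ applies cleanly. This requires that (H-C)-type is preserved under the slice restriction (a statement about $b$-functions and the behavior of $F$ under the Luna slice), which is presumably the content of the "general theorems on $\CaD$-modules defined on semi-simple Lie groups" promised for Section 3. Granting that machinery, weak tameness follows stratum by stratum from the trichotomy above, and combined with the holonomicity count this proves Theorem \ref{thm:main}.
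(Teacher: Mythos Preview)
Your weak-tameness strategy is close to the paper's: stratify $\kg$, argue stratum by stratum via the trichotomy ``conormal not contained / locally isomorphic to $\MFG$ / something explicitly tame,'' and descend by a slice reduction (Theorem \ref{thm:reg}) to a smaller reductive algebra. Two substantive corrections are needed, though.

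First, your separate holonomicity argument has a real error. You reduce it to $\kp + \kg^Y = \kg$ for \emph{every} nilpotent $Y$, and this is false: it is equivalent to the $P$-orbit of $Y$ being open in its $G$-orbit, and Lemma~\ref{lemb} is precisely about the regular nilpotent $Y$ for which this fails. You then retreat to ``only the maximal $P$-orbit need be open,'' which is true but no longer yields your dimension bound without additional work on the non-generic fibers. The paper sidesteps all of this: holonomicity is not proved by a direct count on $\Lambda_\kp$ but falls out of Proposition~\ref{prop:strat}\,(1), since a characteristic variety contained in a finite union of conormals is automatically of dimension $\dim\kg$.

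Second, the induction and base case differ from what you propose. The paper does not work with $\MFP$ directly but passes, via the equivalence of \S\ref{sec:equiv} (Proposition~\ref{prop:iso3}), to the module $\NFG$ on $\kg\times V$; this is what makes Theorem~\ref{thm:reg} apply cleanly, since the slice at a semisimple $S$ yields $\CO_G\hat\otimes\CN_{F',\km}$ with $\km=\kg^S$ a product of smaller $\kg\kl_{n_k}$'s. The induction is on $\dim[\kg,\kg]$, and the base case is \emph{not} $\sld$ but the commutative case $[\kg,\kg]=0$ (\S\ref{sec:comm}), where $\NFG$ is a power of the normal-crossing module and is tame by inspection---this is your missing third branch of the trichotomy. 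The $\sld$ discussion in \S\ref{sec:sld} is illustrative, not the inductive anchor. At a nilpotent point (where the reduction does not decrease the algebra), the paper uses Lemma~\ref{lemb} to show that either $\MFP\simeq\MFG$ locally (hence tame by \cite{GL}) or the conormal to the stratum contains a semisimple direction and so cannot lie in $Ch(\MFP)$; this is exactly your dichotomy (ii)/(iii), and it is the one place where a genuine calculation specific to $\gln$ enters.
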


Let $\kg_\R$ be a real form of $\kg$ and $G_\R$ be the corresponding group. Theorem \ref{thm:main} and
theorem 1.5.7. in \cite{GL} implies:

We say that a distribution is singular if it is supported by a hypersurface. Then:

\begin{cor}\label{cor:main}
 $\MFP$ has no singular distribution (or hyperfunction) solution on an open set of $\kg_\R$.
\end{cor}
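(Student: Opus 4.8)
The plan is to deduce the corollary formally from Theorem~\ref{thm:main} together with the vanishing results for weakly tame modules recalled in~\S\ref{sec:tame}. There is essentially no new mathematical content: once holonomicity and weak tameness of $\MFP$ are established, the corollary is a bookkeeping matter.

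First I would observe that holonomicity and weak tameness are local conditions on $\kg$: the inclusion of $\CHM$ in a union of closures of conormal bundles, and the existence of a tame quasi-$b$-function along each stratum, are both statements that can be checked on a neighbourhood of each point. Hence, by Theorem~\ref{thm:main}, the restriction of $\MFP$ to a complex neighbourhood of any open subset $U$ of $\kg_\R$ is again holonomic and weakly tame. Since $\kg$ is the complexification of the real form $\kg_\R$, the pair $(\kg_\R,\kg)$ is of the shape $(\gL,\gO)$ to which Theorem~\ref{thm:lun} applies.

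Applying Theorem~\ref{thm:lun} to this restricted module shows that $\MFP$ has no distribution solution on $U$ with support in a hypersurface of $U$; as $U$ is arbitrary, this settles the distribution case. The hyperfunction case follows in the same way from the hyperfunction analogue of Theorem~\ref{thm:lun} (theorem~1.5.7 of~\cite{GL}), whose proof is identical: a nonzero solution $u$ of $\MFP$ supported in a hypersurface $S$ would, via $P\mapsto Pu$, generate a nonzero cyclic quotient $\Dg u$ of $\MFP$ with support contained in $S$, contradicting Theorem~\ref{thm:support}.

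Accordingly I do not expect any genuine obstacle at this stage — the entire difficulty is concentrated in Theorem~\ref{thm:main}. The only points that deserve an explicit line of justification are that $\kg$ plays the role of the complexification of $\kg_\R$ in the hypotheses of the solvability theorems of~\cite{GL}, and that restriction to an open subset preserves holonomicity and weak tameness; both are immediate from the definitions in~\S\ref{sec:tame}.
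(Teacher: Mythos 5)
Your proposal is correct and follows the paper's own route exactly: the corollary is stated there as an immediate consequence of Theorem~\ref{thm:main} together with theorem~1.5.7 of \cite{GL} (recalled in \S\ref{sec:tame} as Theorems~\ref{thm:support} and~\ref{thm:lun}). The one spot where you are slightly glib is the parenthetical sketch reducing the hyperfunction case to Theorem~\ref{thm:support} — the cyclic quotient $\Dg u$ is supported in a real hypersurface of $\kg_\R$, whereas Theorem~\ref{thm:support} speaks of support in a hypersurface of the complexification $\gO$, so the reduction requires the extra microlocal input from \cite{GL} — but since you cite theorem~1.5.7 of \cite{GL} for this, just as the paper does, nothing is actually missing.
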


In \cite[corollary 1.6.3]{GL} we proved that $\MFG$ has a stronger property: all its
solutions are $L^1_{loc}$. Here we prove only that $\MFP$
is weakly tame but we will still be able to show that all solutions are  $L^1_{loc}$.

Let $\kg_\R$ be $\gln(\R)$, then $G_\R$ is equal to $\Gln(\R)$. Let $v_0$ be a non zero vector of $\R^n$
and $P_\R$ be the stability group of $v_0$, $\kp_\R$ its Lie algebra. Remark that $\kp_\R$ is a real
form for $\kp$. The same is true if $\kg_\R$ is $\sln(\R)$, $\sln(\C)$ or $\gln(\C)$ viewed as a real form of
$\kg\kl_{2n}(\C)$.

\begin{thm}\label{thm:main2}
Assume that $\kg_\R$ is $\gln(\R)$, $\gln(\C)$, $\sln(\R)$, $\sln(\C)$ or any direct sum of them. Let
$G_\R$ be the corresponding Lie group and $P_\R$ the stability group of a real point.

Then any distribution solution of  $\MFP$  which is invariant under the action of $P_\R$ is a
$L^1_{loc}$ function invariant under the action of $G_\R$.
\end{thm}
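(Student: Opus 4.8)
The plan is to combine Theorem \ref{thm:main} (the module $\MFP$ is holonomic and weakly tame) with the invariance hypothesis, passing through a descent argument that trades the $P_\R$-invariance for $G_\R$-invariance and then invoking the known results on tame modules from \cite{GL}. First I would reduce to the case where $\kg_\R$ is a single $\gln(\R)$, $\gln(\C)$, $\sln(\R)$ or $\sln(\C)$, since both the module $\MFP$ and the group actions are compatible with direct sums; a solution on a product is a solution separately on each factor, and $L^1_{loc}$ together with $G_\R$-invariance is checked factorwise. So from now on $G_\R$ is simple or $\gln$ and $P_\R$ is the stabilizer of a nonzero real vector $v_0$.

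Next, let $T$ be a distribution solution of $\MFP$ on an open set of $\kg_\R$ which is $P_\R$-invariant. Since $\MFP=\Dg/\CJ_F$ where $\CJ_F$ is generated by $\gt(\kp)$ and $F$, being a solution of $\MFP$ means exactly that $T$ is annihilated by all the vector fields $\gt(Z)$, $Z\in\kp$, and by the operators in $F$. The first family of equations says that $T$ is \emph{infinitesimally} invariant under $\kp_\R$; combined with the hypothesis that $T$ is genuinely $P_\R$-invariant this is automatic, but the point is the converse direction: I want to upgrade $T$ to a $G_\R$-invariant object. The mechanism is Corollary \ref{cor:main}: because $\MFP$ is weakly tame, $T$ has no singular part, i.e. $T$ cannot be supported on a hypersurface. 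Now consider the vector fields $\gt(Z)$ for $Z\in\kg_\R$ but $Z\notin\kp_\R$. These do not annihilate $T$ a priori, but $\gt(Z)T$ is again a distribution solution of the \emph{ambient} $G$-module data, and more importantly, one shows using the $P_\R$-invariance that $\gt(Z)T$ is supported on the closed set where the $P$-orbits and $G$-orbits differ, which is contained in a proper analytic subset — hence in a hypersurface after refining. Since a weakly tame module has no quotient supported in a hypersurface (Theorem \ref{thm:support}, and its distributional counterpart Theorem \ref{thm:lun} for the kernel sheaf $\CK_\kp$), the obstruction $\gt(Z)T$ must vanish, so $T$ is killed by all of $\gt(\kg)$, i.e. $T$ is $G_\R$-invariant and a solution of $\MFG$.

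Once $T$ is $G_\R$-invariant and solves $\MFG$, the $L^1_{loc}$ regularity is exactly \cite[corollary 1.6.3]{GL}: $\MFG$ has characteristic variety (\ref{equ:car0}), which is Lagrangian, the module is tame (not merely weakly tame), and under the additional conditions satisfied here all its distribution solutions are locally integrable; moreover the real-analyticity on the regular set follows as in the Harish-Chandra case. This gives the full statement: $T$ is an $L^1_{loc}$ function invariant under $G_\R$.

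The main obstacle I expect is the middle step: rigorously identifying the support of $\gt(Z)T$ with a hypersurface (or a set small enough that Theorem \ref{thm:lun} applies to the relevant subquotient). This is where the hypothesis on the $G$-orbit stratification and the parametrization of $P$-orbits inside a $G$-orbit by dimension — mentioned in the introduction — must be used: one needs that the locus where passing from $P$-invariance to $G$-invariance can fail is cut out by the vanishing of finitely many invariants, hence is a proper analytic subset, and that the conormal directions to it are controlled by the characteristic variety bound (\ref{equ:cark}) of Lemma \ref{lema}. Making this precise — in particular checking that no fiber of the conormal bundle to that locus lies in the characteristic variety, so that condition (iii) of weak tameness does the work — is the technical heart of the argument; everything else is assembling already-established machinery.
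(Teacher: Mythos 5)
Your proposal takes a genuinely different route from the paper, and it has a gap at its central step.

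The paper's argument works on the regular semi-simple locus $\kg_\R^{rs}$: there $\MFG$ is elliptic, so on a simply connected open $U$ one shows $T|_U = \sum f_i Y_i$ with $f_i$ analytic $\MFG$-solutions and $Y_i$ the characteristic functions of the components of $U\setminus\gS_\R$, the jump term being killed because a weakly tame module has no distribution solution supported on a hypersurface. Then $P_\R$-invariance (the genuine group invariance, not merely the infinitesimal one, since $P_\R$ is disconnected) forces all the $f_i$ to agree, so $T|_{\kg_\R^{rs}}$ is an analytic $\MFG$-solution; one extends it to an $L^1_{loc}$ solution $T'$ by \cite[corollary 1.6.3]{GL}, and the difference $S=T-T'$ is a solution of $\MFP$ supported on the hypersurface $\kg_\R\setminus\kg_\R^{rs}$, hence zero.

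Your argument instead tries to show directly that $\gt(Z)T=0$ for $Z\notin\kp$ by observing that $\gt(Z)T$ is supported on $\gS_\R$ and then invoking Theorem \ref{thm:lun} ``for the kernel sheaf $\CK_\kp$.'' The gap is precisely there: $\gt(Z)T$ is \emph{not} a solution of $\MFP$ (one has $\gt(W)\gt(Z)T=\gt([W,Z])T$, which need not vanish when $[W,Z]\notin\kp$), and it lands instead in $\Sol(\CK_\kp)$; but the paper proves weak tameness only for $\MFP$, never for $\CK_\kp$, and weak tameness does not obviously pass to the submodule $\CK_\kp$. So Theorem \ref{thm:lun} is not available for the module your obstruction actually solves, and the conclusion $\gt(Z)T=0$ does not follow. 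The paper circumvents exactly this by constructing a distribution that \emph{is} a solution of $\MFP$ and supported on a hypersurface, namely $S=T-T'$, which requires first producing the $L^1_{loc}$ extension $T'$ via the analysis on $\kg_\R^{rs}$. That analysis (ellipticity of $\MFG$ on the regular set, the normal-crossing structure of $\gS$ there via Proposition \ref{prop:normal}, and the use of $P_\R$-connectivity of $G_\R$-orbits off $\gS_\R$) is the part your sketch omits and is what makes the descent from $P_\R$- to $G_\R$-invariance rigorous.
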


\begin{proof}[Proof of theorem \ref{thm:invariant}]
From example \ref{ex2}, we know that a distribution satisfying the conditions of theorem \ref{thm:invariant}  is a solution of
a module  $\MFP$  hence is $G$-invariant. So as in Baruch \cite{BARUCH}, this theorem is an easy consequence of theorem \ref{thm:main2}.
\end{proof}

However if $\kg_\R$ is a real form of $\Gln(\C)$ different from $\Gln(\R)$ or $\Gln(\C)$, the
intersection of $\kp$ and $\kg_\R$ is not a real form for $\kp$. Then corollary \ref{cor:main} is still
true but the solutions of  $\MFP$  do not correspond to the action of a group.

\section{Example: the $\sld$-case}\label{sec:sld}

We consider the canonical base of $\sld$:

\begin{equation*}
H=\begin{pmatrix}
1 & 0 \\
0 & -1
\end{pmatrix}\qquad
X=\begin{pmatrix}
0 & 1 \\
0 & 0
\end{pmatrix}\qquad
Y=\begin{pmatrix}
0 & 0 \\
1 & 0
\end{pmatrix}
\end{equation*}
and the general matrix of  $\sld$ is written as $Z=xH+yX+zY$. Let $\kg=\sld$ and define $\kp$ as the
subspace generated by $H$ and $X$.

In coordinates $(x,y,z)$ we have:
\begin{align*}
\gt(H)&=2(zD_z-yD_y)\\
\gt(X)&=-zD_x+2xD_y\\
\gt(Y)&=yD_x-2xD_z
\end{align*}

By definition, the value of $x\gt(H)+y\gt(X)+z\gt(Y)$ at the point $Z=xH+yX+zY$
is $[xH+yX+zY,xH+yX+zY]=0$ hence we have $x\gt(H)+y\gt(X)+z\gt(Y)=0$.

Here $\gt(\kp)$ is generated by $(\gt(H),\gt(X))$ while $\gt(\kg)$
is generated by $(\gt(H),\gt(X),\gt(Y))$ hence the kernel of $\MFP\to\MFG$
is the submodule of $\MFP$ generated by $\gt(Y)$. This defines an exact sequence:
\begin{equation*}
0\xrightarrow{\ \ }{\CK_F}\xrightarrow{\ \gt(Y)\ }{\MFP} \xrightarrow{\ \ \ \ \ }{\MFG} \xrightarrow{\ \ }0
\end{equation*}
The module $\CK_F$ is the quotient of $\Dg$ by the ideal $\CJ=\ensemble{Q\in\Dg}{Q\,\gt(Y)\in\CJ_F}$.
Here $\CJ_F$ is the ideal of $\Dg$ generated by $\gt(\kp)$ and a subsheaf $F$
of $\Dgg$ is of (H-C)-type.

The equations
\begin{align*}
z\gt(Y)&=-x\gt(H)-y\gt(X)\qquad [\gt(X),\gt(Y)]=\gt(H) \qquad[\gt(H),\gt(Y)]=-2\gt(Y)\\
\textrm{ and } & [Q,\gt(Y)]=0 \textrm{ for any } Q\in F\subset\Dgg
\end{align*}
show that the $z$,$\gt(X)$, $\gt(H)+2$ and $F$ are contained in $\CJ$.

The characteristic variety of $\CK_F$ is thus contained in the set defined by $z$ and $F$, that is in
$\ensemble{(x,y,z,\gx,\gh,\gz)\in \kg\times\kg^*}{z=0, \gx^2+4\gh\gz=0}$. This variety being involutive, its
ideal of definition is stable under Poisson bracket and
we have also the equation $\{z,\gx^2+4\gh\gz\}=-4\gh$ so the characteristic variety of $\CK_F$
is $$\ensemble{(x,y,z,\gx,\gh,\gz)\in \kg\times\kg^*}{z=0, \gx=0, \gh=0}$$
 that is the conormal bundle to
$S=\{z=0\}$. This implies that $\CK_F$ is isomorphic to a power of $\CB_{S|\kg}=\Dg/\Dg z+\Dg D_x+\Dg D_y$.

For example, if $F=\{D_x^2+4D_yD_z-\gl\}$, then $\CJ$ is generated by $(z,D_y,D_x^2)$.

Consider now distribution solutions of these modules, they make an exact sequence:
\begin{equation*}
0\xrightarrow{\ \ }{Sol(\MG)}\xrightarrow{\ \ \ \ \ }{Sol(\MP)} \xrightarrow{\ \gt(Y)\ }{Sol(\CK_F)}
\end{equation*}
A solution of $\CK_F$ is canceled by $z$ and solution of a system isomorphic to a power of $\CB_{S|\kg}$,
hence it is of the form $\gf(x,y)\gd(z)$ where $\gf(x,y)$ is analytic and $\gd(z)$ is the Dirac
distribution.

\begin{prop}
The module $\MP$ has no singular distribution solution.
\end{prop}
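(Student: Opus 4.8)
The plan is to use the exact sequence
\[
0\to \Sol(\MG)\to\Sol(\MP)\xrightarrow{\ \gt(Y)\ }\Sol(\CK_F)
\]
together with the description of $\Sol(\CK_F)$ just obtained and the fact that $\MG$, being tame, has no distribution solution supported in a hypersurface (theorem \ref{thm:lun}; see also \cite[corollary 1.6.3]{GL}). So let $T$ be a distribution solution of $\MP$ supported in a hypersurface; the goal is to show $T=0$.

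The first step is to confine $\operatorname{supp}T$ to $\{z=0\}$. Since $\gt(Y)T$ lies in $\Sol(\CK_F)$ it is of the form $\gf(x,y)\gd(z)$ with $\gf$ analytic, in particular supported in $\{z=0\}$; hence on the open set $\{z\neq0\}$ the distribution $T$ is annihilated by $\gt(H)$, $\gt(X)$, $\gt(Y)$ and by $F$, so $T|_{\{z\neq0\}}$ is a solution of $\MG$ there. Being supported in a hypersurface it vanishes by theorem \ref{thm:lun}, so $\operatorname{supp}T\subseteq\{z=0\}$.

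The second step uses the order in $z$. Write $T$ locally as $T=\sum_{j=0}^m\gf_j(x,y)\,\gd^{(j)}(z)$ with $\gf_m\neq0$. Applying $\gt(Y)=yD_x-2xD_z$ produces $-2x\gf_m\,\gd^{(m+1)}(z)$ as the top-order term in $z$ of $\gt(Y)T$; since $\gt(Y)T=\gf(x,y)\gd(z)$ has order $0$ in $z$, this forces $x\gf_m=0$. Therefore $T$ has no point of support where $x\neq0$, i.e. $\operatorname{supp}T\subseteq\{x=z=0\}$, and then $\gt(Y)T=\gf(x,y)\gd(z)$ is supported there as well; since $\gf$ is analytic this forces $\gf\equiv0$, hence $\gt(Y)T=0$. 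Thus $T$ is a solution of $\MG$ supported in the hypersurface $\{z=0\}$, and theorem \ref{thm:lun} gives $T=0$.

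I expect no serious obstacle here: the only care needed is the elementary bookkeeping with the $\gd^{(j)}(z)$ and the localization of the support, while the two structural inputs — that $\MG$ has no singular solution, and that $\Sol(\CK_F)$ consists exactly of the $\gf(x,y)\gd(z)$ with $\gf$ analytic — are already available. The general case treated in section \ref{sec:proofmain} will instead replace this hands-on computation by a geometric study of the characteristic variety of $\MFP$, showing directly that it is weakly tame.
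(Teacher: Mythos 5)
Your proof is correct and follows essentially the same route as the paper's: use the exact sequence, the description of $\Sol(\CK_F)$ as $\{\gf(x,y)\gd(z):\gf \text{ analytic}\}$, show $\gf=0$ so that $\gt(Y)T=0$, and conclude because $\MG$ has no singular solution. The genuine improvement is in the middle step: the paper, after localizing $\operatorname{supp}T$ to $\{z=0\}$, simply writes $T=T_1(x,y)\gd(z)$, i.e. tacitly assumes order~$0$ in $z$; you instead allow a general finite expansion $T=\sum_{j\le m}\gf_j\gd^{(j)}(z)$ and read off the top $\gd^{(m+1)}(z)$-coefficient of $\gt(Y)T$, which is the honest way to justify that step.

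One place to tighten: from $x\gf_m=0$ alone you only obtain $\operatorname{supp}\gf_m\subseteq\{x=0\}$, not yet $\operatorname{supp}T\subseteq\{x=z=0\}$. To get the latter you should use that \emph{all} coefficients of $\gd^{(j)}(z)$ with $j\ge1$ in $\gt(Y)T$ vanish, giving the recursion $yD_x\gf_j=2x\gf_{j-1}$ for $1\le j\le m$; descending induction (together with the remark that $x\gf$ supported in $\{x=0\}$ forces $\gf$ supported in $\{x=0\}$, since $x$ is invertible off that set) then propagates $\operatorname{supp}\gf_j\subseteq\{x=0\}$ down to $j=0$. Alternatively, argue locally near any point with $x\ne0$: there the local top-order coefficient must vanish, so the local order drops, contradiction. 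Either way the conclusion $\operatorname{supp}T\subseteq\{x=z=0\}$ holds, and the rest of your argument — $\gf$ analytic and supported in $\{x=0\}$ forces $\gf\equiv0$, hence $\gt(Y)T=0$, hence $T$ is a singular solution of $\MG$, hence $T=0$ — is exactly right and cleaner than the paper's closing sentence.
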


\begin{proof}
Let $T$ be a  singular distribution solution of $\MP$. Outside of $\{z=0\}$, a solution of $\MP$
satisfies $\gt(Y)T=0$ hence is a solution of $\MG$. From \cite[cor 1.6.3]{GL} this implies that $T$ vanishes
outside of$\{z=0\}$ hence is of the form $T=T_1(x,y)\gd(z)$.

We must have $\gt(Y)T(x,y,z)=\gf(x,y)\gd(z)$ hence
$yD_xT_1(x,y)\gd(z)-2xT_1(x,y)\gd'(z)=\gf(x,y)\gd(z)$. So
$T_1(x,y)$ satisfy $yD_xT_1(x,y)=\gf(x,y)$ and $xT_1(x,y)=0$. As $\gf$ is analytic this implies that $\gf=0$.

So $T(x,y,z)=\gd(x)\gd(y)\gd(z)$, but
then we would have $\gt(Y)T(x,y,z)=0$ and thus $T$ would be a singular solution of $\MG$ which is impossible.
\end{proof}

\section{General results on inverse image by invariant maps.}

In the section, we will prove some general results on the $\CaD$-module associated to an action of a
group $G$ on a manifold.

\subsection{Inverse image of a $\CaD$-module.}\label{sec:inverse}

We begin with elementary properties of inverse images that can be find for example in \cite{BJORK}.

Let $\gF:U\to V$ be a holomorphic map between two complex analytic manifolds. The inverse image of a
coherent $\DV$-module $\CM$ by $\gF$ is, by definition, the
$\DU$-module:
\begin{equation*}
\gF^*\CM=\OU\ox_{\gF^{-1}\OV}\gF^{-1}\CM
\end{equation*}
The module $\gF^*\CM$ is not always coherent but this is the case if $\CM$ is
holonomic or if $\gF$ is a submersion.

When $\gF$ is the canonical projection $U\x V\to V$, the module $\gF^*\CM$
is the external product $\OU\widehat\ox\CM$ hence if $\CM=\DV/\CI$ where $\CI$
is a coherent ideal of $\DV$ then $\gF^*\CM=\CaD_{U\x V}/\CJ$ where $\CJ$ is
the ideal of $\CaD_{U\x V}$ generated by $\CI$.

Suppose now that $\gF:U\to V$ is a submersion and let $\CI$ be a coherent ideal of $\DV$. We consider
the subset $\CJ_0$ of $\DU$ defined in the following way:

An operator $Q$ defined on an open subset $U'$ of $U$ is in $\CJ_0$ if and only if there exits
some differential operator $Q'$ on $\gF(U')$ belonging to $\CI$ and such that for any holomorphic
function $f$ on $V$ we have $Q(f_\circ\gF)=Q'(f)\circ\gF$.

Then $\gF^*\CM=\DU/\CJ$ where $\CJ$ is the ideal of $\DU$ generated by $\CJ_0$. The problem being
local on $U$, this is easily deduced from the projection case.

Let $G$ be a group acting on a manifold $U$. To an element $Z$ of the Lie
algebra $\kg$ of $G$ we associate a vector field $\gt_U(Z)$ on $U$ defined as in (\ref{formul1}) by:
\begin{equation}\label{formul2}
\gt_U(Z)(f)(x)=\frac{d}{dt}f(exp(-tZ).x)|_{t=0}
\end{equation}

\begin{lem}\label{lemi}
Let $\gF:U\to V$ be a submersive map of $G$-manifolds satisfying
$\gF(g.x)=g.\gF(x)$ for any $(g,x)\in G\x U$.
Let $\CI$ be a coherent ideal of $\DV$ and $\CM$ be the coherent
$\DV$-module $\DV/\CI$. Then the inverse image $\gF^*\CM$ of $\CM$ by $\gF$ is a coherent $\DU$-module $\DU/\CJ$ such that:

For any $Z\in\kg$, $\gt_V(Z)$ belongs to $\CI$ if and only if $\gt_U(Z)$
belongs to $\CJ$.
\end{lem}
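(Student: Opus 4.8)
The plan is to use the explicit description of $\gF^*\CM$ for a submersion given just above, namely $\gF^*\CM = \DU/\CJ$ where $\CJ$ is generated by the set $\CJ_0$ of operators $Q$ on $U$ for which there exists $Q'\in\CI$ on the image with $Q(f\circ\gF)=Q'(f)\circ\gF$ for all holomorphic $f$ on $V$. The key observation is that the $G$-equivariance of $\gF$ translates directly into the statement that the vector field $\gt_U(Z)$ is "$\gF$-related" to $\gt_V(Z)$: differentiating $\gF(\exp(-tZ).x)=\exp(-tZ).\gF(x)$ in $t$ at $t=0$ and applying a holomorphic function $f$ on $V$ gives, via the chain rule, exactly $\gt_U(Z)(f\circ\gF)=\bigl(\gt_V(Z)f\bigr)\circ\gF$. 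This is the one computation to carry out carefully, and it is short; it is the heart of the argument.

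From there the two implications follow. First I would check the forward direction: if $\gt_V(Z)\in\CI$, then taking $Q'=\gt_V(Z)$ and $Q=\gt_U(Z)$, the relation just established shows $Q\in\CJ_0\subset\CJ$, so $\gt_U(Z)\in\CJ$. The converse is the step requiring a little more care: suppose $\gt_U(Z)\in\CJ$; I want $\gt_V(Z)\in\CI$. Here I would use that $\gF$ is a submersion, so locally it looks like a projection $U'\cong W\x V'\to V'$, and $\CJ$ is the ideal generated over $\DU$ by (the pullback of) $\CI$ together with nothing else — more precisely, by the projection-case description, $\gF^{-1}$ is faithfully flat enough that the natural map identifies $\CJ$ with $\CaD_{U'}\cdot(\gF^*\CI)$ and one has $\CJ\cap \gF^{-1}\CaD_{V'}\subseteq \gF^{-1}\CI$ after accounting for the relative directions. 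Concretely, writing everything in coordinates adapted to the projection, $\gt_V(Z)$ is a differential operator purely in the $V'$-variables (it descends because $\gF$ is equivariant and $\gt_V(Z)$ is already defined downstairs), and membership of $\gt_U(Z)$ in the ideal generated by operators coming from $V'$ forces, by comparing components transverse to the fibers, that $\gt_V(Z)$ itself lies in $\CI$.

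The main obstacle I anticipate is precisely this converse direction: one must rule out the possibility that $\gt_U(Z)$ becomes a member of $\CJ$ "by accident", i.e.\ through a combination of generators that does not descend to $V$. The clean way to handle it is to reduce to the projection case $U=W\x V\to V$ as in the discussion preceding the lemma, where $\CJ=\CaD_{W\x V}\cdot\CI$, and then use that $\CaD_{W\x V}=\CaD_W\,\widehat\ox\,\CaD_V$ is faithfully flat over $\CaD_V$ (or simply split off the $W$-variables by evaluating at a point $w_0\in W$, using that $\gt_U(Z)$, being the pullback of $\gt_V(Z)$ under projection, has no $D_w$-terms): restricting the identity $\gt_U(Z)=\sum_i A_i\,\gt_V(Z_i)'$ — schematically — to $\{w_0\}\x V$ recovers $\gt_V(Z)\in\CI$. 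I would also remark that the hypothesis that $\CM$ is a cyclic quotient $\DV/\CI$ (rather than a general coherent module) is what makes the ideal-theoretic formulation available, and that coherence of $\gF^*\CM$ is automatic here since $\gF$ is a submersion, as recalled above.
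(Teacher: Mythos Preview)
Your approach is essentially the paper's: the single identity $\gt_U(Z)(f\circ\gF)=(\gt_V(Z)f)\circ\gF$, obtained by differentiating the equivariance relation, is exactly what the paper records (with an apparent typo swapping $U$ and $V$), and the paper then declares the lemma ``immediate''. So the core idea matches.

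Two small corrections to your converse discussion. First, in local projection coordinates $U\simeq W\times V\to V$ you write $\CJ=\CaD_{W\times V}\cdot\CI$; in fact $\CJ$ is generated by $\CI$ \emph{together with} the vertical derivations $\partial_{w_i}$ (they lie in $\CJ_0$ with $Q'=0$). Second, it is not true that $\gt_U(Z)$ ``has no $D_w$-terms'': the $G$-action on $U$ may well move the $W$-factor. What the identity does give is that the $\partial_v$-part of $\gt_U(Z)$ has $w$-independent coefficients and equals $\gt_V(Z)$; the $\partial_w$-part can be arbitrary but lies in $\CJ$ anyway. With these fixes your restriction-to-$\{w_0\}\times V$ argument goes through cleanly: one is reduced to showing that an operator $P(v,\partial_v)$ lying in the ideal generated by $\CI$ and the $\partial_{w_i}$ already lies in $\CI$, which is immediate from $\gF^*\CM\simeq\CO_W\widehat\otimes\CM$. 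The paper does not spell any of this out; only the forward implication is actually used later (in the proof of Theorem~\ref{thm:reg}).
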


\begin{proof}
An direct calculation shows that $\gt_V(Z)(f_\circ \gF)=\gt_U(Z)(f)\circ \gF$
which shows immediately the lemma.
\end{proof}

\subsection{Equivalence.}\label{sec:equiv}
Let $G$ be a complex Lie group acting transitively on a complex manifold $\gO$.
Let $v_0\in\gO$ and let $P=G^{v_0}$ be the stability subgroup at $v_0$, hence
$\gO$ is isomorphic to the quotient $G/P$.

We denote by $(g,v)\mapsto g\dt v$ the action of $G$ on $\gO$ and by
$(g,X)\mapsto g\dt X$ the adjoint action of $G$ on its Lie algebra $\kg$. Then
$G$ acts on $\kg\x\gO$ by $g\dt(X,v)=(g\dt X,g\dt v)$. The group $P$ acts on
$\kg$ by restriction of the action of $G$.

Let $U$ be an open subset of $\gO$ containing $v_0$ and $\gf$ a
holomorphic map $\gf:U\to G$ such that $\gf(v)\dt v_0=v$ for all $v$ in $U$.

This defines a submersive morphism  $\gF:\kg\x U\to \kg$ by
$\gF(X,v)=\gf(v)^{-1}\dt X$. The subsets of $\kg\x U$ invariant under $G$ are
exactly the sets $\gF^{-1}(S)$ where $S$ is an orbit of $P$ on $\kg$.

{\bf Remark:}
It is known that $\gF$ defines an equivalence between distributions on $\kg\x U$
invariant under $G$ and distributions on $\kg$ invariant under $P$ (see Baruch
\cite{BARUCH} for example). We will prove a similar result for $\CaD$-modules.
However, in the case of distributions the map $\gF$ is of class $\mathcal C^\infty$
hence may be globally defined. Here we need a holomorphic map and such a section is not
defined globally on an open set $U$ stable under $G$. This is of no harm as long as we
consider locally the vector fields tangent to the orbits. In this section, when we
speak of $G$-orbits on $\kg\x U$, it means the intersection of $\kg\x U$ with a $G$-orbit of $\kg\x \gO$.

For $X\in\kg$ the action of $G$ on $\kg\x\gO$ and on $\kg$ defines vector fields
$\gt_{\kg\x\gO}(X)$ on $\kg\x\gO$ and $\gt_\kg(X)$ on $\kg$ through formula
(\ref{formul2}).

Let $\kp$ be the Lie algebra of $P$ and denote by $\gt(\kp)$ the set of vector
fields $\gt_\kg(X)$ for $X\in\kp$. Let us denote by $\gt_*(\kg)$  the set of
vector fields $\gt_{\kg\x\gO}(X)$ for $X\in\kg$. Define now $\CN_{\gt_*(\kg)}$ as the
quotient of $\CaD_{\kg\x\gO}$ by the ideal generated by $\gt_*(\kg)$ and
$\CM_{\gt(\kp)}$ as the quotient of $\CaD_\kg$ by the ideal generated by $\gt(\kp)$.

\begin{lem}\label{lem:iso1}
The map $\gF$ defines an isomorphism between the restrictions to $\kg\x U$ of
the  $\CaD_{\kg\x\gO}$-modules $\CN_{\gt_*(\kg)}$ and $\gF^*\CM_{\gt(\kp)}$.
\end{lem}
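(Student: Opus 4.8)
The plan is to combine Lemma~\ref{lemi} with the explicit description of the $G$-invariant vector fields on $\kg\x\gO$ and the relation between the Killing form, the action of $P$ on $\kg$, and the Lie algebra $\kp$. First I would note that by Lemma~\ref{lemi} applied to the submersion $\gF:\kg\x U\to\kg$ (which is $G$-equivariant for the $G$-action on $\kg\x U$ and the $P$-restricted action on the target $\kg$, after twisting by the section $\gf$), the inverse image $\gF^*\CM_{\gt(\kp)}$ is a coherent $\CaD_{\kg\x\gO}$-module $\CaD_{\kg\x\gO}/\CJ$, and for each $Z$ the relation ``$\gt_\kg(Z)\in$ the defining ideal of $\CM_{\gt(\kp)}$'' is equivalent to ``the operator on $\kg\x U$ lifting $\gt_\kg(Z)$ via $\gF$ lies in $\CJ$''. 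The content of the lemma is that this lifted operator is, up to the ideal $\CJ$, exactly $\gt_{\kg\x\gO}(Z)$ when $Z\in\kp$, and more generally that $\CJ$ is precisely the ideal generated by $\gt_*(\kg)$.

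The key computational step is the equivariance identity: for $Z\in\kg$ one has $d\gF(\gt_{\kg\x\gO}(Z)) = \gt_\kg(\mathrm{Ad}(\gf(v)^{-1})Z)$ at the point $(X,v)$, i.e.\ the vector field $\gt_{\kg\x\gO}(Z)$ on $\kg\x U$ is $\gF$-related to a vector field on $\kg$ that at the image point $\gF(X,v)=\gf(v)^{-1}\dt X$ equals the value of some $\gt_\kg(Z')$ with $Z'\in\kg$ depending on $v$. When $v=v_0$, $\gf(v_0)=e$ and this is just $\gt_\kg(Z)$; for $Z\in\kp$, since $P=G^{v_0}$ stabilizes $v_0$, one checks that $\gt_{\kg\x\gO}(Z)$ is tangent to the fiber direction in a way that makes it $\gF$-related to $\gt_\kg(Z)$ globally on $U$ — this is the point where the choice $P=G^{v_0}$ is used. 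Conversely, the vector fields $\gt_{\kg\x\gO}(Z)$ for $Z$ ranging over all of $\kg$ span, over $\CO_{\kg\x U}$, the same module modulo $\gF^{-1}$ of the ideal generated by $\gt(\kp)$: indeed the $G$-orbits on $\kg\x\gO$ pull back via $\gF$ to the $P$-orbits on $\kg$ (stated in the text), so the tangent distributions match, and a dimension/spanning count over the local ring finishes the identification of generators.

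Concretely, I would carry out the proof in the following order. (1) Write down $\gF$ and verify it is a submersion and that $\gF^{-1}(S)$ for $S$ a $P$-orbit gives the $G$-orbits on $\kg\x U$. (2) Prove the $\gF$-relatedness formula $\gt_{\kg\x\gO}(Z)(f_\circ\gF) = \bigl(\gt_\kg(\mathrm{Ad}(\gf(v)^{-1})Z)\,f\bigr)_\circ\gF$ by a direct differentiation using $\gF(g\dt(X,v)) = g\dt\gF(X,v)$ — actually, the cleaner statement is simply $\gt_{\kg\x\gO}(Z)(h_\circ\gF)=(\gt_\kg(Z)h)_\circ\gF$ obtained exactly as in the proof of Lemma~\ref{lemi} from $G$-equivariance of $\gF$, so that $\gF^*$ of the ideal generated by $\gt(\kg)$ on the base would be generated by $\gt_*(\kg)$; but here the base ideal is generated only by $\gt(\kp)$, and one must check that the extra generators $\gt_\kg(Z)$, $Z\in\kg\setminus\kp$, pull back into the ideal generated by $\gt_*(\kp)$ — equivalently that on $\kg\x U$ the full $G$-orbit tangent sheaf $\gt_*(\kg)$ is generated over $\CO_{\kg\x U}$ by $\gt_*(\kp)$ together with vector fields tangent to the $U$-factor, which holds because $\kp$ surjects onto $T_{v_0}\gO$ complementarily and $\gf$ trivializes. (3) Invoke the description of $\gF^*$ for submersions (the ideal $\CJ_0$ of §\ref{sec:inverse}) to conclude $\gF^*\CM_{\gt(\kp)} = \CaD_{\kg\x\gO}/\CJ$ with $\CJ$ generated by $\gt_*(\kg)$, i.e.\ $\gF^*\CM_{\gt(\kp)}\cong\CN_{\gt_*(\kg)}|_{\kg\x U}$.

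The main obstacle I anticipate is step (2)–(3): making rigorous that the ideal generated by the pulled-back $\gt_\kg(Z)$ for all $Z\in\kg$ (not just $Z\in\kp$) is already contained in — hence equal to — the ideal generated by $\gt_*(\kp)$ plus the relations coming from $\gF$ being a submersion, i.e.\ that passing from $\kp$ to $\kg$ costs nothing after inverse image because the missing directions are exactly the $U$-directions that $\gF$ collapses. This is a statement about $\CaD$-module ideals, not just about vector-field sheaves, so one must be careful that the generators of $\CJ_0$ (operators $Q$ on $\kg\x U$ with $Q(f_\circ\gF)=(Q'f)_\circ\gF$, $Q'\in\CI$) are accounted for; concretely, for $Z\in\kg$ write $Z = Z_\kp + Z'$ where $Z'\in\gf(v)^{-1}\dt(\text{something})$ so that $\gt_{\kg\x\gO}(Z)$ differs from a combination of $\gt_*(\kp)$-type fields by a vertical field annihilating $f_\circ\gF$, which lands in $\CJ_0$. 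Once this bookkeeping is done the isomorphism is immediate, and it is exactly the $\CaD$-module refinement of the classical distribution-level equivalence recalled in the Remark preceding the statement.
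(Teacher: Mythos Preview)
Your geometric intuition is right---the $G$-orbits on $\kg\times U$ correspond under $\gF$ to the $P$-orbits on $\kg$---but your execution runs into a real difficulty that the paper avoids entirely by a simple trick you have missed.

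The problem is your invocation of Lemma~\ref{lemi}. That lemma requires a single group acting on both source and target with $\gF(g\cdot x)=g\cdot\gF(x)$. The map $\gF(X,v)=\gf(v)^{-1}\cdot X$ does \emph{not} satisfy this: a short computation gives $\gF(g\cdot(X,v))=p^{-1}\cdot\gF(X,v)$ for some $p\in P$ depending on $(g,v)$, so $\gF$ intertwines the $G$-action upstairs with a \emph{varying} $P$-action downstairs, not a fixed one. Your parenthetical ``after twisting by the section $\gf$'' acknowledges this but does not fix it, and the formula $\gt_{\kg\times\gO}(Z)(h\circ\gF)=(\gt_\kg(Z)h)\circ\gF$ you write in step (2) is simply false in general. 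This is exactly why your anticipated ``main obstacle'' (matching the ideal generated by $\gt_*(\kg)$ with the pullback of the ideal generated by $\gt(\kp)$) becomes genuinely hard in your framework: you are trying to track a twisted equivariance by hand.

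The paper's proof sidesteps all of this with a one-line factorization. Set $\gY:\kg\times U\to\kg\times U$, $\gY(X,v)=(\gF(X,v),v)$. This $\gY$ is an \emph{isomorphism}, and it sends $G$-orbits on $\kg\times U$ to sets of the form $(\text{$P$-orbit})\times U$. Hence $\gY$ carries $\gt_*(\kg)$ to $\gt(\kp)\,\widehat\ox\,\CT_U$, the vector fields tangent to $P$-orbits together with all vector fields on $U$. But $\CaD_{\kg\times U}$ modulo the ideal generated by $\gt(\kp)\,\widehat\ox\,\CT_U$ is exactly $p^*\CM_{\gt(\kp)}$ for the projection $p(X,v)=X$ (this is the trivial projection case of \S\ref{sec:inverse}). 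Since $\gF=p\circ\gY$, one gets $\gF^*\CM_{\gt(\kp)}=\gY^*p^*\CM_{\gt(\kp)}\simeq\CN_{\gt_*(\kg)}$ immediately. No bookkeeping of ``extra generators'' or decomposition $Z=Z_\kp+Z'$ is needed: the isomorphism $\gY$ converts the question into the product situation where the inverse image is transparent.
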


\begin{proof}
Let $\Psi$ be the map $\kg\x U\to \kg\x U$ given by $\gY(X,v)=(\gF(X,v),v)$. It
is an isomorphism which exchanges the $G$-orbits on $\kg\x U$ with the product
by $U$ of the $P$-orbits on $\kg$. Hence it exchanges the vector fields tangent
to the $G$-orbits that is $\gt_*(\kg)$ with the product of the set $\gt(\kp)$ of
vector fields on $\kg$ tangent to the $P$-orbits by the set $\CT_U$ of all
vector fields on $U$ that is $\gt(\kp)\widehat\ox \CT_U$.

The quotient of $\CaD_{\kg\x U}$ by  $\gt(\kp)\widehat\ox \CT_U$ is precisely
$p^*\CM_\gt$ where $p:\kg\x U\to \kg$ is the canonical projection $p(X,v)=X$ (see the previous section).
As $\gF=p\circ\gY$, we are done.
\end{proof}

We may also define the module $\CM_{\gt(\kg)}$ as the quotient of $\CaD_\kg$ by
 the ideal generated by $\gt(\kg)$. Then we have:

\begin{lem}\label{lem:iso2}
The map $\gF$ defines an isomorphism between the restrictions to $\kg\x U$ of
the  $\CaD_{\kg\x\gO}$-modules $\CM_{\gt(\kg)}\widehat\ox\CO_U$ and $\gF^*\CM_{\gt(\kg)}$.
\end{lem}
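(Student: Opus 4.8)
The plan is to mimic the proof of Lemma \ref{lem:iso1}, using the same intermediate isomorphism $\gY:\kg\x U\to\kg\x U$, $\gY(X,v)=(\gF(X,v),v)$, but now tracking the vector fields $\gt_\kg(X)$ on $\kg$ alone rather than $\gt_{\kg\x\gO}(X)$. The point is that $\gF=p\circ\gY$ where $p:\kg\x U\to\kg$ is the projection, and $\gY$ is a biholomorphism, so $\gF^*\CM_{\gt(\kg)}=\gY^*p^*\CM_{\gt(\kg)}$; since pulling back along an isomorphism merely transports the module, it suffices to identify $p^*\CM_{\gt(\kg)}$ and then apply $\gY^*$.

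First I would record that, by the projection case recalled in \S\ref{sec:inverse}, $p^*\CM_{\gt(\kg)}$ is the external product $\CM_{\gt(\kg)}\widehat\ox\CO_U$, i.e. the quotient of $\CaD_{\kg\x U}$ by the ideal generated by $\gt(\kg)$ (viewed as operators in the $\kg$-variables only, with no $\CT_U$ component adjoined). Next I would compute the effect of $\gY$ on this ideal. The map $\gY$ is the "twist" $(X,v)\mapsto(\gf(v)^{-1}\dt X,v)$; differentiating the relation between the $G$-action on $\kg$ and on $\kg\x\gO$, one checks that $\gY$ carries the vector field $\gt_{\kg\x\gO}(X)$ to $\gt_\kg(X)$ plus a vector field tangent to the $U$-factor — this is exactly the computation already used in Lemma \ref{lem:iso1}, where $\gt_*(\kg)$ goes to $\gt(\kp)\widehat\ox\CT_U$. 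Here, instead, I push forward the \emph{untwisted} field $\gt_\kg(X)$ (which lives purely on the $\kg$-factor of the \emph{target} copy) back through $\gY^{-1}$: since $\gY^{-1}(Y,v)=(\gf(v)\dt Y,v)$ has the same shape, $(\gY^{-1})_*\gt_\kg(X)$ is again $\gt_\kg(\mathrm{Ad}(\gf(v))X)$ plus a $\CT_U$-term, hence the ideal generated by all $\gt_\kg(X)$, $X\in\kg$, is preserved because $\mathrm{Ad}(\gf(v))$ permutes $\kg$. Thus $\gY^*\big(\CM_{\gt(\kg)}\widehat\ox\CO_U\big)\cong\CM_{\gt(\kg)}\widehat\ox\CO_U$ as $\CaD_{\kg\x U}$-modules, and composing with the identification of $p^*$ gives the claim.

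I expect the main obstacle to be the bookkeeping in the pushforward computation: one must verify that the extra terms produced when $\gY$ moves $\gt_\kg(X)$ lie in the span over $\CO_{\kg\x U}$ of the vector fields $\gt_\kg(X')$ together with $\CT_U$-fields \emph{in a way that does not enlarge the ideal} — more precisely, that the image ideal is generated by $\{\gt_\kg(X'):X'\in\kg\}$ alone (no $\CT_U$-generators appear, in contrast with Lemma \ref{lem:iso1}). This follows because the $G$-action on $\kg\x\gO$ restricted to the $\kg$-coordinate is just the adjoint action, so the $v$-dependence enters only through $\mathrm{Ad}(\gf(v))\in\Gln(\kg)$, and conjugation by an invertible linear map sends the family $\{\gt_\kg(X)\}_{X\in\kg}$ onto itself. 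Once this linear-algebra observation is in place, the argument is formally identical to that of Lemma \ref{lem:iso1} and I would present it in two or three lines.
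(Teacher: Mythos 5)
Your proposal is correct and follows the same route as the paper: factor $\gF=p\circ\gY$, observe that $\gF^{-1}(G\text{-orbit})=(G\text{-orbit})\times U$, and transport the ideal generated by $\gt_\kg(\kg)$ through $\gY$, which merely twists the generators by the invertible linear map $\mathrm{Ad}(\gf(v))$ and hence leaves the ideal unchanged. One small presentational remark: your intermediate phrase ``$\gt_\kg(\mathrm{Ad}(\gf(v))X)$ plus a $\CT_U$-term'' is a red herring --- since $\gt_\kg(Z)$ has no $U$-component and $\gY^{-1}$ is the identity on the $U$-factor, no such term ever appears, exactly as you establish in your closing paragraph; it would be cleaner to assert this directly rather than raise and then dispel the possibility.
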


\begin{proof}
The inverse image by $\gF$ of a $G$-orbit is the product of that $G$-orbit by $U$ hence
the proof is the same than the proof of lemma (\ref{lem:iso1}).
\end{proof}

Let $Q$ be a differential operator on $\kg$, then $Q\ox 1$ is a differential operator on
$\kg\x U$ and as $\gY$ is an isomorphism, this defines $\gY^*(Q\ox 1)$ as a differential
operator on $\kg\x U$. If $Q$ is $P$-invariant, then $\gY^*(Q\ox 1)$ is $G$-invariant on
$\kg\x U$ and if $Q$ is $G$-invariant on $\kg$ then $\gY^*(Q\ox 1)$ is equal to $Q\ox 1$. We
denote $\widetilde\gY(Q)=\gY^*(Q\ox 1)$.

Let $F$ be a set of differential operators on $\kg$ invariant under the
$P$-action, we consider four $\CaD$-modules:
\begin{itemize}
\item $\CM_{F,\kp}$ is the quotient of $\CaD_\kg$ by the ideal
generated by $F$ and $\gt_\kg(\kp)$
\item $\CM_{F,\kg}$ is the quotient of $\CaD_\kg$ by the ideal
generated by $F$ and $\gt_\kg(\kg)$
\item $\CN_{F,\kg}$ is the quotient of $\CaD_{\kg\x\gO}$ by the ideal
generated by $\widetilde\gY(F)$ and $\gt_*(\kg)=\gt_{\kg\x\gO}(\kg)$
\item the product $\CM_{F,\kg}\widehat\ox\CO_\gO$
\end{itemize}

As a consequence of Lemma \ref{lem:iso1} and Lemma \ref{lem:iso2} we have the following result:

\begin{prop}\label{prop:iso3}
The $\CaD_{\kg\x U}$-modules $\CN_{F,\kg}$ and $\gF^*(\CM_{F,\kp})$ are
isomorphic as well as  $\CM_{F,\kg}\widehat\ox\CO_\gO$ and  $\gF^*(\CM_{F,\kg})$.

These isomorphism are
compatible with the morphisms $\CM_{F,\kp}\to\CM_{F,\kg}$ and $\CN_{F,\kg}\to \CM_{F,\kg}\widehat\ox\CO_\gO$.

If the operators of $F$ are $P$-invariant the operators of $\widetilde\gY(F)$
are $G$-invariant and if they are $G$-invariant then those of $\widetilde\gY(F)$
are $G$-invariant and independent of $v\in U$.
\end{prop}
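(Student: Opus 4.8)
The plan is to obtain both isomorphisms from Lemmas \ref{lem:iso1} and \ref{lem:iso2} by merely adjoining the operators of $F$ to the relations, the inverse image functor $\gF^{*}$ being right exact. I would keep the factorization $\gF=p\circ\gY$ of the proof of Lemma \ref{lem:iso1}, with $p\colon\kg\x U\to\kg$ the projection and $\gY\colon\kg\x U\to\kg\x U$, $\gY(X,v)=(\gF(X,v),v)$, an isomorphism, so that $\gF^{*}=\gY^{*}\circ p^{*}$. For the projection, $p^{*}(\CaD_\kg/\CI)=(\CaD_\kg/\CI)\widehat\ox\CO_U=\CaD_{\kg\x U}/\CJ$ with $\CJ$ generated by $\CI$ (lifted as operators not involving the $U$-variable) and by the vector fields $\CT_U$ on $U$; for the isomorphism $\gY$, the functor $\gY^{*}$ replaces a relation ideal generated by operators $Q_i$ by the ideal generated by their transforms $\gY^{*}Q_i$. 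As already used in the proofs of Lemmas \ref{lem:iso1} and \ref{lem:iso2}, $\gY$ carries the $G$-orbits of $\kg\x U$ onto the products $S\x U$ of the $P$-orbits $S$ of $\kg$ by $U$, and preserves the products of the $G$-orbits of $\kg$ by $U$; hence $\gY^{*}$ sends the ideal generated by $\gt_{*}(\kg)$ onto the one generated by $\gt(\kp)$ and $\CT_U$, and sends the ideal generated by $\gt(\kg)\ox1$ and $\CT_U$ onto itself.

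For the first isomorphism, apply $\gF^{*}=\gY^{*}p^{*}$ to the presentation $\CM_{F,\kp}=\CaD_\kg/(F,\gt(\kp))$. The relations $\gt(\kp)$ together with the $\CT_U$ produced by $p^{*}$ become, after $\gY^{*}$, the ideal generated by $\gt_{*}(\kg)$; and for $Q\in F$, the operator $Q$ is sent by $p^{*}$ to $Q\ox1$ and then by $\gY^{*}$ to $\gY^{*}(Q\ox1)=\widetilde\gY(Q)$, by the very definition of $\widetilde\gY$. Hence $\gF^{*}\CM_{F,\kp}=\CaD_{\kg\x U}/(\widetilde\gY(F),\gt_{*}(\kg))=\CN_{F,\kg}$. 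The second isomorphism is obtained the same way, Lemma \ref{lem:iso2} replacing Lemma \ref{lem:iso1}: the relations $\gt(\kg)$ and the $\CT_U$ become the ideal generated by $\gt(\kg)\ox1$ and $\CT_U$, while $F$ becomes $\widetilde\gY(F)$, and since $\widetilde\gY(Q)=Q\ox1$ whenever $Q$ is invariant under the adjoint action (the remark preceding the statement), this is exactly the relation ideal of $\CM_{F,\kg}\widehat\ox\CO_\gO$, so that $\gF^{*}\CM_{F,\kg}\simeq\CM_{F,\kg}\widehat\ox\CO_\gO$.

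For the compatibility with the morphisms, $\CM_{F,\kp}\to\CM_{F,\kg}$ is the canonical surjection enlarging the relation ideal by $\gt(\kg)$; since $\gF^{*}$ is right exact it commutes with it, and through the two isomorphisms just built it becomes the canonical map $\CN_{F,\kg}\to\CM_{F,\kg}\widehat\ox\CO_\gO$ — the required inclusion of relation ideals being clear since $\gt_{*}(Z)=\gt(Z)\ox1+1\ox\gx_Z$ with $\gx_Z\in\CT_\gO$, and $\widetilde\gY(F)=F\ox1$ modulo those relations. The last assertion only records the remark defining $\widetilde\gY$: under $\gY$ the $G$-action on $\kg\x U$ corresponds to the $P$-action on $\kg$ together with the trivial action on $U$, so $\gY^{*}$ turns a $P$-invariant operator into a $G$-invariant one and leaves an operator already invariant under $G$ unchanged, hence independent of $v\in U$. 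The step I expect to need the most care is precisely this bookkeeping of how the relations of $F$ transform under $\gY^{*}$ — in particular the identification of $\widetilde\gY(F)$ with $F\ox1$ modulo the $\gt(\kg)$-relations needed for the second isomorphism; this is immediate in the case $F\subset\Dgg$ used in the rest of the paper, where $\widetilde\gY(F)=F\ox1$ on the nose.
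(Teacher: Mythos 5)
The paper gives no explicit proof of this proposition beyond the phrase ``As a consequence of Lemma \ref{lem:iso1} and Lemma \ref{lem:iso2}''; your argument fills in exactly the intended details and by exactly the intended route, namely the factorization $\gF=p\circ\gY$ already used in the proof of Lemma \ref{lem:iso1}, the observation that $\gY$ carries the $G$-orbit foliation of $\kg\x U$ to the product foliation $(P\text{-orbit})\x U$ while preserving $(G\text{-orbit})\x U$, and the identity $\widetilde\gY(Q)=Q\ox1$ for $G$-invariant $Q$. Your closing caveat is well placed: the identification $\widetilde\gY(F)=F\ox1$ used for the second isomorphism is literal only when $F$ consists of $G$-invariant operators, which is precisely the (H-C)-type case $F\subset\Dgg$ used throughout the paper; for merely $P$-invariant $F$ one would have to argue the ideal equality modulo $\gt(\kg)\ox1$ and $\CT_U$, a point the paper itself leaves tacit.
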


\subsection{Reduction to a subalgebra}\label{sec:reduc}

We assume now that $G$ is a reductive Lie group operating on a manifold $\gO$
hence on $\kg\x\gO$. The algebra $\kg$ is reductive hence $[\kg,\kg]$ is a
semi-simple Lie algebra with a non-degenerate Killing form $B$. We extend the
form $B$ to a non-degenerate invariant bilinear form on $\kg$ that we still
denote by $B$.

Let $S\in \kg$ be a semi-simple element and $\km=\kg^S$, the reductive Lie
subalgebra of elements commuting with $S$. Let $\kq=\km^\bot$ the orthogonal for
the form $B$ and
$\km''=\ensemble{Y\in\km}{\textrm{det}(ad Y)|_\kq\ne 0}$, let $M=G^S$ the
associated Lie group.

We consider the map $\gY:G\x\km''\x \gO\to\kg \x \gO$ defined by
$\gY(g,Y,v)=(g\dt Y,g\dt v)$. As $\kg=\km\oplus [\kg,S]$, $\gY$ is a submersion
onto the open set $G\km''\x \gO$. If $U$ is a $G$-invariant open subset of
$\kg\x\gO$,
$\gY^{-1}(U)$ is equal to $G\x U'$ for some open subset $U'$ of $\km''\x \gO$
invariant under the action of $M$.

Let $F$ be a (H-C)-type subsheaf of $\Dgg$ defined on $U$. According to
definition (\ref{hctype}), F is a subsheaf of $\Dgg$ such that $\gs(F)$ contains a power
of $\CO_+[\kg^*]^G$. Let $\gt_*(\kg)$ be the sheaf of vector fields tangent to the
orbits of $G$ on $\kg\x \gO$ as in section \ref{sec:equiv}.

Let $\CN_{F,\kg}$ be the coherent $\Dgo$-module defined on $U$ as
the quotient of $\Dgo$ by the ideal generated by $F$ and $\gt_*(\kg)$.

Remark that here we assume that the operators of $F$ are $G$-invariant.
For such operators $Q$ we have $\widetilde\gY(Q)= Q\ox 1$ hence we may confuse
$\widetilde\gY(F)$ and $F$.

\begin{thm}\label{thm:reg}
There exists a (H-C)-type subsheaf $F'$ of ${\CaD_\km^M}$ on $U'$ such that
$\gY^*\CN_{F,\kg}\simeq\CO_G\hat\ox\CN_{F'\!\!,\,\km}$
on $\gO$.
\end{thm}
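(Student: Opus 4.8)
The plan is to make the problem local on $G\times\km''\times\gO$, use the submersion structure of $\gY$, and reduce to a statement that compares invariant differential operators on $\kg$ with invariant differential operators on $\km$. First I would note that $\gY$ factors: writing $\gY(g,Y,v)=(g\dt Y,g\dt v)$, the second component is just the $G$-action on $\gO$, and since we will take the external product with $\CO_G$ anyway, the essential content is the first component $G\times\km''\to\kg$, $(g,Y)\mapsto g\dt Y$. The key geometric fact is that $\kg=\km\oplus\kq$ with $\kq=[\kg,S]$ ($=\km^\bot$), and that on $\km''$ the adjoint action of $Y$ on $\kq$ is invertible, so the differential of $(g,Y)\mapsto g\dt Y$ is surjective and the map is a submersion onto $G\km''$. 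Locally one can therefore choose coordinates in which $\gY$ looks like a projection composed with a change of variables, exactly as in the setup of section \ref{sec:inverse}; by the general description of inverse images by submersions recalled there, $\gY^*\CN_{F,\kg}=\CaD/\CJ$ where $\CJ$ is generated by the operators $Q$ on the source whose push-forward along $\gY$ lies in the ideal generated by $F$ and $\gt_*(\kg)$.

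Next I would identify the two generating families of that ideal separately. The vector fields $\gt_*(\kg)$ tangent to the $G$-orbits on $\kg\times\gO$ pull back, under $\gY$, to the vector fields tangent to the $G$-orbits on $G\times\km''\times\gO$; but those orbits are exactly the sets $\{(g\dt Y_0,g\dt v_0): g\in G\}$, i.e. the $G$-translates in the first factor. Concretely, differentiating the left $G$-action shows $\gY^*(\gt_*(\kg))$ is spanned by the left-invariant vector fields on the $G$-factor together with the vector fields $\gt_{\km''}(Y)$ on $\km''$ tangent to the $M$-orbits (using $M=G^S$ acts on $\km''$), plus the $\gt_\gO$ part — this is the same computation as in Lemma \ref{lem:iso1}, now with $\kq$ playing the role that the complementary directions played there. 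Modding out by the left-invariant vector fields on $G$ is what produces the factor $\CO_G\hat\ox(-)$ and leaves us on $\km''\times\gO$ with the $M$-orbit directions $\gt_{*,\km}(\km)$ quotiented out. For the operators of $F$: since they are $G$-invariant on $\kg$, their pull-backs $\widetilde\gY(Q)=Q\ox 1$ restrict, along the inclusion $\km''\hookrightarrow\kg$ transverse to the orbits, to $M$-invariant differential operators on $\km''$ — this is where one uses that restriction of a $G$-invariant operator to the slice $\km$ at a semisimple point gives an $M$-invariant operator, the classical Harish-Chandra radial-part / Chevalley restriction phenomenon. Call $F'$ the sheaf of operators on $\km$ so obtained (extended $M$-equivariantly from $U'$).

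It then remains to check that $F'$ is of (H-C)-type, i.e. that $\gs(F')$ contains a power of $\CO_+[\km^*]^M$. This is the step I expect to be the main obstacle, because it is not purely formal: one must show that taking principal symbols and restricting to the slice are compatible, and that the restriction map $\CO[\kg^*]^G\to\CO[\km^*]^M$ hits (a power of) the augmentation ideal. The restriction map on invariant symbols is, via the Killing form and the decomposition $\kg^*=\km^*\oplus\kq^*$, essentially "restrict an invariant polynomial on $\kg$ to $\km$", and by Chevalley's restriction theorem this is surjective onto $\CO[\km]^M$ (at least onto $\CO[\km\cap[\kg,\kg]]^M$ tensored with the central part); in particular a generating set of $\CO_+[\km^*]^M$ is reached, so a power of it lies in $\gs(F)|_{\km^*}\subseteq\gs(F')$. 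One should also verify that the common zero set of $F'$ is the nilpotent cone of $\km^*$, which follows since a semisimple element's centralizer has nilpotent cone $\kN\cap\km$ and the invariants of $\kg$ cut out $\kN$. Having established that $F'$ is (H-C)-type, the isomorphism $\gY^*\CN_{F,\kg}\simeq\CO_G\hat\ox\CN_{F'\!,\,\km}$ follows by matching generators of the two defining ideals, completing the proof.
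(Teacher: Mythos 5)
Your overall strategy matches the paper's. Both pass through the submersion $\gY$, the decomposition $\kg=\km\oplus[\kg,S]$ with $\ad S$ invertible on the complement over $\km''$, and the observation that dividing by the vector fields tangent to the $G$-orbits turns a $G$-invariant operator on $\kg$ into an $M$-invariant one on $\km$ whose symbol is the restriction of the original symbol. The paper streamlines the vector-field bookkeeping by invoking Lemma~\ref{lemi} twice --- once for the action $g'.(g,A,v)=(g'g,A,v)$, which shows the ideal contains all derivations on the $G$-factor and so produces the $\CO_G\hat\ox(\,\cdot\,)$ factor, and once for $m.(g,A,v)=(mgm^{-1},m\dt A,m\dt v)$, which produces the generators $\gt_{\km\times\gO}(\km)$ --- rather than via a coordinate computation, but this is a difference of presentation, not of substance.

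The one real misstep is in your verification that $F'$ is of (H-C)-type. You claim, via ``Chevalley's restriction theorem'', that restriction gives a surjection $\CO[\kg^*]^G\to\CO[\km^*]^M$. It does not: choosing a Cartan subalgebra $\kh\subset\km$ and applying Chevalley to both $\kg$ and $\km$ identifies this map with the inclusion $\CO[\kh^*]^{W}\hookrightarrow\CO[\kh^*]^{W_\km}$, where $W$ is the Weyl group of $\kg$ and $W_\km$ that of $\km$; this is a \emph{strict} inclusion as soon as $\km\subsetneq\kg$ (already for $\kg=\kg\kl_2(\C)$ and $\km=\kh$, its image consists only of the symmetric polynomials in the two diagonal entries). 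Hence ``a generating set of $\CO_+[\km^*]^M$ is reached'' is false, and the containment $\gs(F')\supset(\CO_+[\km^*]^M)^{N'}$ you try to deduce from it does not follow by that route. What actually carries the argument --- and what the paper tacitly uses when it asserts that $F'=\gy^*F$ is again (H-C)-type --- is exactly the zero-set statement you state afterwards: the common zeros in $\km^*$ of the restricted invariants form the nilpotent cone $\kN\cap\km$ of $\km$. Together with the Nullstellensatz and with the way the (H-C)-type hypothesis is exploited downstream (namely to locate characteristic varieties inside $\kg\x\kN$, as in the proof of Lemma~\ref{lema}), this is all that is needed; the surjectivity claim should be removed and replaced by that zero-set argument.
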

\begin{proof}
The map $\gY$ is a submersion hence $\gY^*\CN_{F,\kg}$ is coherent and canonically a
quotient of
$\CaD_{G\x\km''\x \gO}$ by an ideal $\CJ$.

Consider the action of $G$ on $G\x\km''\x \gO$ given by $g'.(g,A,v)=(g'g,A,v)$.
The map $\gY$ is compatible with this action of $G$ hence we may apply lemma \ref{lemi}
to the inverse image $\gY^*\CN_{F,\kg}$. We get that $\CJ$ is an ideal containing the
vector fields $\gt_G(X)$ for all $X\in\kg$ that is all vector fields on $G$. This shows that
$\CJ$ is the product of $\CaD_G$ by an ideal of $\CaD_{U'}$.
Hence $\gY^*\CN_{F,\kg}=\CO_G\hat\otimes \CN$ where $\CN$ is some holonomic module on
$U'$.

Consider now the action of $M$ on $G\x\km''\x \gO$ given by
$$m.(g,A,v)=(mgm^{-1},m\dt A, m\dt v)$$
and on $\kg\x \gO$ induced by that of $G$.
We may again apply lemma \ref{lemi}. We get that $\CN$ is equal to the quotient
of $\CaD_{\km\x \gO}$ by an ideal $\CI$ which contains the vector fields
$\gt_{\km\x \gO}(X)$
for any $X\in\km$.

We will now define the set $F'$ from $F$. As $S$ is semi-simple we have
$\kg=\km\oplus[\kg,S]$ hence a local isomorphism $\gy:[\kg,S]\ox\km''\ox\kg$
given by $\gy(X,m)=exp(X).m$. In coordinates $(x,t)$ induced by this
isomorphism, all derivations in $x$ are in the ideal generated by the vector
fields tangent to the $G$-orbits.

After division by these derivations an operator $Q$ invariant under $G$ depends
only on $(t,D_t)$ i.e. is a differential operator on $\km$ invariant under the
action of $M$. Denote by $\gy^*Q$ this operator. If the principal symbol of $Q$
is a function of  $\CO[\kg^*]^G$, the principal symbol of $\gy^*Q$ is its
restriction to $\CO[\km^*]^M$. Hence if $F$ is an (H-C)-type subsheaf of $\Dgg$,
$F'=\gy^*F$ is an (H-C)-type subsheaf of ${\CaD_\km^M}$. Then the ideal $\CI$ is
generated by $F'$ and $\gt_{\km\x V}(\km)$ which shows the theorem.
\end{proof}

\section{The $\Gln(\C)$ and $\Sln(\C)$ cases}\label{sec:proofmain}

\subsection{Main proof}

WAssume now that $G$ is the linear group $\Gln(\C)$
acting on $V=\C^n$ by the standard action. Then $P$ is the subgroup of matrices which leave
invariant a point $v_0\in V =\C^n$ and its Lie algebra $\kp$ is the set of matrices which cancel
$v_0$. If $v_0=0$ $P=G$ and everything is trivial otherwise $v_0\in V^*=\C^n-\{0\}$ and all
subgroups $P$ are conjugate.

It is known \cite{WAL} that a $G$-orbit in $\kg$ splits into a finite number of $P$-orbits. More precisely,
let $\kg^{(d)}$ be the set of matrices $A$ such that the vector space generated by $(A^pv_0)_{p=0,\dots,n-1}$
is $d$-dimensional. Then the $P$-orbits are exactly the intersections of the $G$-orbits with the
varieties $\kg^{(d)}$. In particular, $\kg^{(n-1)}$ is a Zarisky open subset of $\kg$ where $P$-orbits and $G$-orbits coincide.

\begin{rem}\label{rem:sigma}.
Let $\gS$ be the complementary of $\kg^{(n-1)}$. It is a hypersurface of $\kg$. Outside of $\gS$,
$P$- and $G$-orbits coincide, hence the vector fields $\gt(\kp)$ and $\gt(\kg)$ are the same. So
the kernel $\CK_\kp$ of $\MFP\to\MFG$ is supported by $\gS$.
\end{rem}

More generally,  we will consider a product
\begin{equation}
G=\prod_{k=1}^N Gl_{n_k}(\C)\qquad\textrm{acting on }\qquad V=\prod_{k=1}^N \C^{n_k}
\end{equation}

Let $F$ be a (H-C)-type subset of $\Dgg$,
 we may consider the $\CaD$-modules $\MFG$, $\MFP$ and $\NFG$ as in section \ref{sec:equiv}. We will show:

\begin{prop}\label{prop:strat}
There is a stratification $\kg=\bigcup \kg_\ga$ such that

(1) The characteristic variety of $\MFP$  is contained in the union of the conormals to the strata  $\kg_\ga$

(2) For each $\ga$, if the conormal to $\kg_\ga$ is contained in the characteristic variety of $\MFP$,
then $\MFP$ admits a tame quasi-$b$-function along $\kg_\ga$.
\end{prop}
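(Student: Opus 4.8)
The plan is to build the stratification $\kg=\bigcup\kg_\ga$ by refining the decomposition into $P$-orbits, using the two layers of structure available: the $G$-orbit stratification of $\kg$ (whose conormals control the characteristic variety of $\MFG$ via \eqref{equ:car0}) and the finer partition into the locally closed sets $\kg^{(d)}\cap(\text{$G$-orbit})$, which are exactly the $P$-orbits by \cite{WAL}. So I would take $\kg_\ga$ to be the connected components of the sets $\mathcal O^G\cap\kg^{(d)}$ as $\mathcal O^G$ runs over $G$-orbits and $d$ over $\{1,\dots,n-1\}$; for the product group one takes products of such pieces. One must first check this is a genuine stratification in the sense of \S\ref{sec:tame}: local finiteness and the frontier condition follow because both the $G$-orbit closures and the $\overline{\kg^{(d)}}=\bigcup_{d'\le d}\kg^{(d')}$ are algebraic, and because each $G$-orbit meets only finitely many $\kg^{(d)}$.

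For part (1), I would argue that the characteristic variety of $\MFP$, which by Lemma \ref{lema} lies in $\{(X,Y):Y\in\kN,\ [X,Y]\in\kp^\bot\}$, is contained in $\bigcup_\ga\overline{T^*_{\kg_\ga}\kg}$. Over the open stratum $\kg^{(n-1)}$ this is immediate from Remark \ref{rem:sigma}: there $\MFP$ and $\MFG$ coincide, and the characteristic variety of $\MFG$ is the Lagrangian \eqref{equ:car0} sitting inside the union of conormals to $G$-orbits, which on $\kg^{(n-1)}$ are the $\kg_\ga$. Over $\gS$ one uses the equivalence of \S\ref{sec:equiv}--\ref{sec:reduc}: via Proposition \ref{prop:iso3} the module $\MFP$ is, locally, the inverse image $\gF^*$ of $\NFG$, and by Theorem \ref{thm:reg} the latter reduces — after slicing at a semisimple element $S$ — to $\CO_G\hat\ox\CN_{F',\km}$ for a (H-C)-type sheaf $F'$ on the smaller algebra $\km=\kg^S$. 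Inductively on $\dim\kg$ one knows the characteristic variety of the reduced module is controlled by conormals to the analogous strata in $\km$, and one transports this back: the conormal directions that appear are precisely those conormal to the $G\cdot(\text{$M$-stratum in }\km'')$ pieces, i.e. to the $\kg_\ga$. The point $Y\in\kN$ forces us into the nilpotent fibers, and $[X,Y]\in\kp^\bot$ is exactly the condition that cuts the $G$-orbit conormal down to the $P$-orbit conormal inside each $\kg^{(d)}$.

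For part (2), I would invoke Theorem \ref{thm:reg} together with the behaviour of quasi-$b$-functions under the two operations used in the reduction, submersive inverse image (\S\ref{sec:inverse}) and the slicing $\gY$. A $b$-function along $\kg_\ga$ for $\MFP$ is obtained by pulling back a $b$-function along the corresponding stratum of the reduced module on $\km''\x\gO$, and the tameness bound (roots $>-p$, or $>-\sum n_i$ in the weighted version) is inherited because the inverse image by a submersion does not change the $b$-function, while the slicing only adds extra transverse variables handled by the product structure $\CO_G\hat\ox(-)$. The induction is grounded in the base cases where $\kg$ is small: $\sld$, treated in \S\ref{sec:sld}, where the kernel $\CK_F$ was identified with a power of $\CB_{S|\kg}$, whose $b$-function along $\{z=0\}$ is $\gth+1$, tame since the codimension is $1$. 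For the open strata condition (1)(i) of Definition \ref{def:wtame} there is nothing to prove.

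The main obstacle I expect is the inductive bookkeeping in part (2): one must verify that the stratum of $\MFP$ along which a tame quasi-$b$-function is needed corresponds, under $\gF$ and $\gY$, to a stratum of the reduced module $\CN_{F',\km}$ for which tameness is already known by induction, and that the weighting $(n_1,\dots,n_p)$ and the codimension count match up so that the bound "roots $>-\sum n_i$" is genuinely preserved — in particular that no root is pushed down to the threshold when passing from $\kg^{(d)}$ to $\kg^{(d-1)}$. Making the correspondence of strata precise, and checking that the conormal-containment hypothesis in (2) is exactly what triggers the non-trivial case of the reduction, is where the real work lies; the geometry (Lemma \ref{lema}, Remark \ref{rem:sigma}, \cite{WAL}) and the reduction machinery (Proposition \ref{prop:iso3}, Theorem \ref{thm:reg}) are by contrast essentially off the shelf.
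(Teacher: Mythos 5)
Your proposal runs into two genuine problems, one with the stratification and one with the inductive step; the second is the more serious.

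\textbf{The stratification.} You take $\kg_\ga$ to be the connected components of $\mathcal O^G\cap\kg^{(d)}$ as $\mathcal O^G$ ranges over $G$-orbits. But the $G$-orbits do not form a locally finite family: near a regular semisimple point they vary in a continuous $\operatorname{rank}(\kg)$-dimensional family, so this is not a stratification in the sense of \S\ref{sec:tame}. The paper instead uses the finite stratification $S_{(\gp,\kO,p)}$ of \S 4.2, obtained by packing together $G$-orbits of the same ``type'' (same closed symmetric $\gp\subset\gD$ and same nilpotent orbit $\kO$ of $\kqp$) and then intersecting with the sets $\{d(X,v_0)=p\}$. Your $\kg^{(d)}$-refinement is the right second ingredient, but the first ingredient has to be the Galina--Laurent strata, not individual orbits.

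\textbf{The induction at nilpotent points.} Your entire argument for part (2) rests on Theorem \ref{thm:reg}: slice at the semisimple part $S$ of $X$, pass to $\km=\kg^S$, and invoke the inductive hypothesis because $\dim[\km,\km]<\dim[\kg,\kg]$. This works precisely when $S\ne 0$. But at a nilpotent stratum $S=0$, $\kg^S=\kg$, and there is no dimension drop --- the inductive step simply does not apply. Your proposal has no mechanism for producing a tame quasi-$b$-function (or for avoiding the need for one) at the nilpotent strata, which is exactly where the real content of the proposition lies. The paper handles these points by a separate geometric argument (Lemma \ref{lemb} and the three-case discussion leading to Corollary \ref{propa}): at a nilpotent $X$, either the $P$-orbit is open dense in the $G$-orbit, in which case $\MFP\cong\MFG$ near $X$ and tameness is inherited from \cite{GL}; or $X$ is regular nilpotent with non-open $P$-orbit, in which case Lemma \ref{lemb} exhibits a non-central semisimple $Y$ with $[X,Y]\in\kp^\bot$, showing the conormal to the stratum is \emph{not} contained in $\CHM$ (so condition (iii) of Definition \ref{def:wtame} applies and no $b$-function is needed); or $X$ is nilpotent non-regular, where the conormal was already outside the characteristic variety of $\MFG$. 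Your plan tries to construct a $b$-function at every stratum, which is both unnecessary and, at the non-dense nilpotent strata, not what happens --- the proposition as stated only requires a $b$-function when the conormal \emph{is} in $\CHM$, and the paper exploits exactly that escape clause. Relatedly, the base case of the induction is the commutative case $[\kg,\kg]=0$ (\S\ref{sec:comm}), not $\sld$; the $\sld$ discussion in \S\ref{sec:sld} is illustrative, not a base case.

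Your overall architecture --- equivalence with $\NFG$ via Proposition \ref{prop:iso3}, slicing via Theorem \ref{thm:reg}, induction on the size of the semisimple part, identifying the open strata via Remark \ref{rem:sigma} --- does match the paper. What is missing is the correct finite stratification and, above all, the separate treatment of nilpotent points (Lemma \ref{lemb}), without which the induction has no anchor on the nilpotent cone.
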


By definition this shows that the module $\MFP$ is holonomic and weakly tame (theorem \ref{thm:main}).
In the proof we will encounter three situations:

a) the conormal to $\kg_\ga$ is not contained in the characteristic variety of $\MFP$

b) the module $\MFP$ is isomorphic to $\MFG$ in a neighborhood of $X_\ga$ which implies the existence of a tame $b$-function because $\MFG$ is tame.

c) the module $\MFP$ is a power of the module associated to a normal crossing divisor and is trivially tame.

Remark that we will never need to explicit the definition of a tame $b$-function here. We will get it from results of \cite{GL} concerning the module $\MFG$.

By proposition \ref{prop:iso3}, proposition \ref{prop:strat} is equivalent to the following:

\begin{prop}\label{prop:}
There is a stratification $\kg\x V=\bigcup X_\ga$ such that

(1) The characteristic variety of $\NFG$  is contained in the union of the conormals to the strata  $X_\ga$

(2) For each $\ga$, if the conormal to $X_\ga$ is contained in the characteristic variety of $\NFG$, then $\NFG$
admits a tame quasi-$b$-function along $X_\ga$.
\end{prop}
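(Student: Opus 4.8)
The plan is to prove Proposition~\ref{prop:} by constructing the stratification of $\kg\times V$ from the two compatible stratifications that are already available: the $G$-orbit stratification of $\kg\times V$ (equivalently, by Proposition~\ref{prop:iso3}, the $G$-orbit stratification of $\kg$ refined into $P$-orbits via the sets $\kg^{(d)}$), and the stratification coming from the characteristic variety of $\MFG$ (hence of $\CM_{F,\kg}\widehat\ox\CO_\gO$, i.e. of $\NFG$ away from $\gS$). Concretely, I would take $X_\ga$ to be the connected components of the intersections $\bigl(\kg\text{-orbit}\bigr)\cap\bigl(V\times\kg^{(d)}\bigr)$, then further refine so that the characteristic variety of $\NFG$ — which by Lemma~\ref{lema} sits inside $\{(X,Y,v)\colon Y\in\kN,\ [X,Y]\in\kp^\bot\}$ pulled back — is contained in the union of the conormals $\overline{T^*_{X_\ga}(\kg\times\gO)}$. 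The first claim (1) is then essentially the statement that this characteristic variety is $G$-conic and contained in a closed $G$-invariant involutive subvariety compatible with the orbit stratification; one gets a stratification refining the orbits whose conormals cover it, possibly after a further subdivision to handle strata where $\MFP$ degenerates like a normal crossing module.

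For claim (2) I would argue stratum by stratum according to the trichotomy announced in the paper. On the open stratum $\kg^{(n-1)}$ (the complement of $\gS$, see Remark~\ref{rem:sigma}) the vector fields $\gt(\kp)$ and $\gt(\kg)$ agree, so $\MFP\simeq\MFG$ there, and since $\MFG$ is tame by \cite{GL}, every stratum meeting $\kg^{(n-1)}$ inherits a tame quasi-$b$-function — this is case (b). For strata inside $\gS$ one must look more closely: either the conormal to $X_\ga$ is simply not contained in $\CHM$ (case (a)), in which case condition (iii) of Definition~\ref{def:wtame} applies and nothing further is needed, or the module $\MFP$ near a generic point $X_\ga$ of the stratum is, after the reduction of Theorem~\ref{thm:reg} to a smaller group $M=G^S$ (and using the product structure $G=\prod Gl_{n_k}$), a power of the $\CaD$-module attached to a normal crossing divisor — as in the $\sld$ computation of Section~\ref{sec:sld}, where $\CK_F$ turned out to be a power of $\CB_{S|\kg}$ — and such modules are manifestly tame, case (c). The engine making the reduction work is Theorem~\ref{thm:reg}: slicing by the centralizer of a semisimple element reduces the dimension of $\kg$ while preserving the (H-C)-type structure, so one runs an induction on $\dim\kg$, the base case being $\sld$ (and $Gl_1$, which is trivial) treated in Section~\ref{sec:sld}.

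The key steps, in order, are: (i) set up the stratification of $\kg\times V$ refining $G$-orbits and the $\kg^{(d)}$-decomposition, and check that $\CHM$ of $\NFG$ is contained in the union of its conormals, using Lemma~\ref{lema} and \cite[Prop 4.8.3]{HOTTA} for $\MFG$; (ii) dispose of the open strata and all strata meeting the complement of $\gS$ by the isomorphism $\MFP\simeq\MFG$ and tameness of $\MFG$; (iii) for the remaining strata inside $\gS$, apply Theorem~\ref{thm:reg} to reduce to a proper reductive subgroup, and run the induction on $\dim\kg$; (iv) in the genuinely new local models arising at the bottom of the induction, identify $\MFP$ (or rather the relevant subquotient $\CK_\kp$) with a power of a normal crossing module, exactly as in the $\sld$ example, so that case (c) applies; (v) assemble the local information into the global weak tameness statement via Definition~\ref{def:wtame}.

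The main obstacle I expect is step (iv): controlling precisely the local structure of $\MFP$ along the strata of $\gS$ where $P$-orbits are strictly smaller than $G$-orbits. Lemma~\ref{lema} only gives an \emph{upper bound} for the characteristic variety, and that bound is generally non-lagrangian, so to conclude that $\MFP$ is holonomic \emph{and} that its local model is a normal crossing power (rather than something worse) one needs genuine geometric input about how the $P$-orbit stratification of a single nilpotent $G$-orbit sits inside $\kg$ — this is where the "parametrization by the dimension of $P$-orbits" mentioned in the introduction enters. The delicate point is to show that after the slice reduction of Theorem~\ref{thm:reg} the transverse directions to the orbit behave like coordinate hyperplanes, so that the kernel $\CK_\kp$ of $\MFP\to\MFG$ is supported on a normal crossing configuration with the right $b$-functions; once that is established, the weak tameness of $\MFP$ follows by combining it with the already-known tameness of $\MFG$ through the exact sequence \eqref{equ:morph}.
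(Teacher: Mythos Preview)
Your overall architecture---stratify $\kg\times V$ by $G$-orbits refined by the integer $d(X,v)$, run an induction via Theorem~\ref{thm:reg}, and sort strata into the trichotomy (a)/(b)/(c)---matches the paper. But the assignment of cases is wrong at the decisive point, and this leaves a genuine gap.

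The problem is the treatment of \emph{nilpotent} points lying in $\gS$. Theorem~\ref{thm:reg} requires a nonzero semisimple element $S$ to slice, so it cannot be applied when $X$ is nilpotent; these points must be handled directly in every $\gln$, not only at the bottom of the induction. You propose to cover them by case~(c), i.e.\ to show that near such a point $\MFP$ (or $\CK_\kp$) is a power of a normal-crossing module ``exactly as in the $\sld$ example''. The paper does \emph{not} do this, and there is no reason to expect it to work for $n>2$: the $\sld$ computation is illustrative, not the template. Instead, the paper disposes of nilpotent points in $\gS$ via case~(a): Lemma~\ref{lemb} constructs, for any regular nilpotent $X$ whose $P$-orbit is not dense in $G.X$, an explicit semisimple $Y\notin\kc$ with $[X,Y]\in\kp^\bot$, which exhibits a point of the conormal to the stratum lying \emph{outside} the set~(\ref{equ:cark}). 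For non-regular nilpotent $X$ the conormal already fails to lie in~(\ref{equ:car0}), hence a fortiori in~(\ref{equ:cark}). Thus at every nilpotent point in $\gS$ the conormal is not contained in $Ch(\MFP)$, and condition~(iii) of Definition~\ref{def:wtame} applies. You are missing this lemma entirely; without it your step~(iv) has no content.

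Two smaller corrections. First, the base of the induction is not $\sld$ but the commutative case $\dim[\kg,\kg]=0$ (Section~\ref{sec:comm}); that is where case~(c) genuinely occurs, and it propagates via Theorem~\ref{thm:reg} to give the normal-crossing description of $\MFP$ on $\kg_{rs}$ (Proposition~\ref{prop:normal}). Second, your closing idea of deducing weak tameness of $\MFP$ from that of $\MFG$ through the exact sequence~(\ref{equ:morph}) does not work: weak tameness is a condition on $b$-functions of the cyclic module itself, not something inherited from quotients and submodules, so the argument has to be made stratum by stratum for $\MFP$ directly.
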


\subsection{Stratification}
Let us first recall the stratification that we defined in \cite{GL} on any semi-simple algebra $\kg$.

Fix a Cartan subalgebra $\kh$ of $\kg$ and denote by $\gD=\gD(\kg,\kh)$ the root system associated to
$\kh$. For each $\ga\in\gD$ we denote by $\kg_\ga$ the root subspace corresponding to $\ga$ and by
$\kh_\ga$ the subset $[\kg_\ga,\kg_{-\ga}]$ of $\kh$.

Let $\CF$ be the set of the subsets $\gp$ of $\gD$ which are closed and
symmetric that is such that $(\gp+\gp)\cap\gD\subset \gp$ and $\gp=-\gp$. For
each $\gp\in\CF$ we define $\khp=\sum_{\ga\in \gp}\kh_\ga$,
$\kgp=\sum_{\ga\in \gp}\kg_\ga$, $\khpo=\ensemble{H\in\kh}{\ga(H)=0
\textrm{ if } \ga\in \gp}$, $\khpp =\ensemble{H\in\khpo}{\ga(H)\ne0 \textrm{ if } \ga\notin \gp}$
and $\kqp=\khp+\kgp$. $\kqp$  is a semisimple Lie subalgebra of $\kg$

\begin{rem}
With the notations of \S \ref{sec:reduc} we have
$\km=\kh\oplus\kg_P$ and $\km''=(\kh_P^\bot)'\oplus\kh_P\oplus\kg_P$.
\end{rem}

To each $\gp\in\CF$ and each nilpotent orbit $\kO$ of $\kqp$ we
associate a conic subset of $\kg$
\begin{equation*}
S_{(\gp,\kO)}=\bigcup_{x\in\khpp}G.(x+\kO)\label{def:strat}
\end{equation*}
It is proved in \cite{GL} that these sets define a finite stratification of $\kg$ independent of the choice of $\kh$.

If $\kg$ is a reductive Lie algebra, we get a stratification of $\kg$ by adding the center $\kc$ of $\kg$ to any
stratum of the semi-simple algebra $[\kg,\kg]$:
\begin{equation*}
\widetilde S_{(\gp,\kO)}=S_{(\gp,\kO)}\oplus \kc
\end{equation*}

This applies in particular to $\gln(\C)$. For a matrix $X$ of $\gln(\C)$ and a vector $v$ of $\C^n$, we
denote by $d(X,v)$ the dimension of the vector space generated by $(v,Xv,X^2v,\dots,X^{n-1}v)$ where
$Xv$ denotes the usual action. If $X=X_1+\dots+X_q$ is an element of $\oplus \kg\kl_{n_i}(\C)$,  $d(X,v)$ is the sum
$\sum  d(X_i,v_i)$.

Let $v_0$ be a non-zero vector of $\C^n$. To each $\gp\in\CF$, each nilpotent orbit $\kO$ of $\kqp$ and
each integer $p\subset [0\dots n-1]$ we associate:
\begin{equation*}
S_{(\gp,\kO,p)}=\ensemble{X\in \widetilde S_{(\gp,\kO)}}{d(X,v_0)=p}   \label{def:strat2}
\end{equation*}
The sets $\ensemble{X\in \kg}{d(X,v_0)=p}$ form a finite family of closed algebraic subsets of
$\kg$ hence the sets $S_{(\gp,\kO,p)}$ define a new stratification of $\kg$.

In the same way, we define a stratification of $\kg\x V$ by
\begin{equation*}
T_{(\gp,\kO,p)}=\ensemble{(X,v)\in \kg\x V}{X\in\widetilde S_{(\gp,\kO)},\ d(X,v)=p}   \label{def:strat3}
\end{equation*}
If $\gF$ is the map $\gF:\kg\x U\to \kg$ defined by a map $\gf:U\to G$ as in section \ref{sec:equiv}, we
have $\gF^{-1} (S_{(\gp,\kO,p)}=T_{(\gp,\kO,p)})$.

The stratification $\left(\widetilde S_{(\gp,\kO)}\right)$ has been associated to $\MFG$ in \cite{GL}.
We will associate $\left(S_{(\gp,\kO,p)}\right)$ to $\MFP$ and $\left(T_{(\gp,\kO,p)}\right)$ to $\NFG$.

To end this section let us calculate the characteristic variety of the module $\NFG$ when $G=\Gln(\C)$. On
$\kg=\kg\kl_n(\C)$ we consider the scalar product $(A,B)\mapsto \textrm{trace}(AB)$ which extends the
Killing form of $\ks\kl_n(\C)$. This identifies $\kg$ and $\kg^*$ and in the same way the usual
hermitian product $(u,v)\mapsto\scal u{\bar v}$ on $\C^n$ identifies $V$ and $V^*$.

 If $u$ and $v$ are two vectors of $V=\C^n$ we denote by $u\wedge \bar v$ the $(n,n)$-matrix whose entry $(i,j)$ is $u_i\bar v_j$.

\begin{prop}\label{prop:car2}
The characteristic variety of $\NFG$ is contained in
\begin{equation}
\ensemble{(X,u,Y,v)\in\kg\x V\x \kg\x V}{Y\in \kN, [X,Y]=u\wedge \bar v}
\end{equation}
\end{prop}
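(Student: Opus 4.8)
The plan is to compute the characteristic variety of $\NFG$ directly from its presentation as a quotient of $\CaD_{\kg\x\gO}$ by the ideal generated by $F$ and $\gt_*(\kg)=\gt_{\kg\x V}(\kg)$. The characteristic variety is contained in the zero set of the principal symbols of all operators in this ideal, so I need only control the symbols of the generators. First I would identify $T^*(\kg\x V)=\kg\x V\x\kg^*\x V^*$ with $\kg\x V\x\kg\x V$ using the trace form on $\kg$ and the hermitian product on $V$, writing a covector as $(Y,v)$ over the point $(X,u)$. The operators coming from $F$ are $G$-invariant and of (H-C)-type, so $\gs(F)$ contains a power of $\CO_+[\kg^*]^G$; since these symbols involve only the $\kg^*$-variable $Y$, their common zero set is exactly $\{Y\in\kN\}$. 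That disposes of the nilpotency condition.

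Next I would treat the vector fields $\gt_*(Z)$ for $Z\in\kg$. By definition $\gt_{\kg\x V}(Z)$ is the vector field on $\kg\x V$ generating the one-parameter group $g_t=\exp(-tZ)$ acting diagonally: on $\kg$ by the adjoint action and on $V=\C^n$ by the standard action. Differentiating, its value at $(X,u)$ is $([X,Z],-Zu)\in\kg\x V$ (the tangent space), so its principal symbol at the covector $(Y,v)$ is the pairing $\scal{[X,Z]}{Y}+\scal{-Zu}{v}$ computed with the trace form and the hermitian product. Thus the characteristic variety lies in
\begin{equation*}
\ensemble{(X,u,Y,v)}{Y\in\kN,\ \forall Z\in\kg:\ \textrm{trace}([X,Z]Y)-\scal{Zu}{\bar v}=0}.
\end{equation*}
The remaining step is to simplify the linear condition on $Z$. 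Using the invariance identity $\textrm{trace}([X,Z]Y)=\textrm{trace}(Z[Y,X])$ and the fact that $\scal{Zu}{\bar v}=\textrm{trace}(Z(u\wedge\bar v))$ — which is exactly the definition of $u\wedge\bar v$ chosen in the excerpt — the condition becomes $\textrm{trace}\big(Z([Y,X]-u\wedge\bar v)\big)=0$ for all $Z\in\kg$. Since the trace form is nondegenerate on $\gln(\C)$, this forces $[Y,X]-u\wedge\bar v=0$, i.e. $[X,Y]=u\wedge\bar v$ (up to the sign, which is absorbed by $v\mapsto -v$ in the identification, or handled by keeping careful track of signs). This yields precisely the asserted set.

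The main obstacle, such as it is, is bookkeeping rather than conceptual: getting the sign conventions and the identification of $T^*V$ via the hermitian (hence conjugate-linear) product consistent with the definition of $u\wedge\bar v$, and making sure that "contained in" is all that is claimed — we do not need the reverse inclusion here, so there is no need to verify that all these symbols actually generate the symbol ideal. One should also note that, as in the proof of Lemma \ref{lema}, the two conditions ($Y$ nilpotent, coming from $F$; and the moment-type equation, coming from $\gt_*(\kg)$) are imposed simultaneously, so the characteristic variety is contained in the intersection, which is exactly the displayed set. The computation is the evident analogue of the identification of the characteristic variety of $\MFG$ in (\ref{equ:car0}), with the extra $V$-variables contributing the rank-one term $u\wedge\bar v$ on the right-hand side of the bracket equation.
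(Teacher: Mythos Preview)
Your proof is correct and follows essentially the same route as the paper's: compute the value of $\gt_{\kg\x V}(Z)$ at $(X,u)$, pair with $(Y,v)$, rewrite $\scal{Zu}{\bar v}$ as $B(Z,u\wedge\bar v)$ and use invariance plus nondegeneracy of the trace form, exactly as in Lemma~\ref{lema}. Your remark about the sign ambiguity is apt---the paper's own computation is loose about signs in the same place, and as you note the discrepancy is absorbed in the identification of $V^*$ with $V$.
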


\begin{proof}
The proof is similar to the proof of lemma \ref{lema}. The vector field $\gt_{\kg\x V}(Z)$
has value $([X,Z],Zu)$ at the point $(X,u)\in\kg\x V$ hence the characteristic variety of  $\NFG$
is contained in the set of points $(X,u,Y,v)\in\kg\x V\x \kg\x V$ satisfying
$B([X,Z],Y)+\scal {Zu}{\bar v}=0$ for any $Z\in\kg$.

But we have $\scal{Zu}{\bar v}=\sum Z_{ij}u_j\bar v_i=B(Z,u\wedge\bar v)$ hence
$B(Z,[X,Y]-u\wedge\bar v)=0$ for any $Z$ which means that  $[X,Y]=u\wedge\bar v$.
\end{proof}

\subsection{Nilpotent points}\label{sec:nilpot}

In this section, we take $G=\Gln(\C)$, $\kg=\gln(\C)$, $v_0$ is a non zero vector of $\C^n$, $P=G^{v_0}$
and $\kp$ its Lie algebra.

\begin{lem} \label{lemb}
Let $X$ be a regular nilpotent element of $\kg$. Then if the orbit $P.X$
is not open dense in the orbit $G.X$, there exists a semi-simple element $Y$ in $\kg$
which is not in the center of $\kg$ and such that $[X,Y]\in \kp^\bot$.
\end{lem}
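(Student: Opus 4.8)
The plan is to reduce everything to the normal form of a regular nilpotent element of $\gln(\C)$, namely a single Jordan block of size $n$. Two preliminary identifications are needed. First, since the invariant form on $\kg=\gln(\C)$ is $(A,B)\mapsto\operatorname{trace}(AB)$ and $\kp=\{A\in\kg : Av_0=0\}$, a direct computation identifies $\kp^\bot$ with the space of matrices $M\in\kg$ such that $\operatorname{Im} M\subseteq\C v_0$, i.e. the rank $\le 1$ matrices of the form $v_0\otimes\phi$ with $\phi\in(\C^n)^{*}$. Second, by the description of the $P$-orbits inside the $G$-orbits recalled before Remark \ref{rem:sigma} (via the sets $\kg^{(d)}$), the open stratum, on which $P$- and $G$-orbits coincide, is the set of $X$ for which $v_0$ is a cyclic vector; if $v_0$ is cyclic for $X$ then $X$ lies in that Zariski-open set and $P.X$ is a nonempty open subset of the irreducible set $G.X$, hence dense. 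So the hypothesis of the lemma forces $v_0$ not to be cyclic for $X$, i.e. the $X$-cyclic subspace generated by $v_0$ is a proper, $X$-stable subspace of $\C^n$. Since $X$ is a single Jordan block its only proper invariant subspaces are the $\ker X^{j}=\operatorname{Im} X^{n-j}$, so there is an integer $k$ with $1\le k\le n-1$, $v_0\in\ker X^{k}$ and $v_0\notin\ker X^{k-1}$; in particular $v_0\in\operatorname{Im} X$.

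Next I would build a Jordan basis of $\C^n$ adapted to $v_0$. Using $\ker X^{k}=\operatorname{Im} X^{n-k}$, choose $w$ with $X^{n-k}w=v_0$. Then $X^{n-1}w=X^{k-1}v_0\ne 0$, so $w$ is a cyclic vector for $X$ and the vectors $u_i:=X^{n-i}w$ for $1\le i\le n$ form a basis of $\C^n$ with $Xu_1=0$, $Xu_i=u_{i-1}$ for all $i\ge 2$, and $u_k=X^{n-k}w=v_0$.

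Finally, let $Y$ be the operator that is diagonal in the basis $(u_i)$ with $Yu_i=0$ for $i\le k$ and $Yu_i=u_i$ for $i>k$. Then $Y$ is semisimple, and it is not a scalar matrix, hence not central in $\kg$, because both eigenvalues $0$ and $1$ occur, using $1\le k\le n-1$. Evaluating $[X,Y]u_i=X(Yu_i)-Y(Xu_i)$ on each basis vector gives $[X,Y]u_{k+1}=u_k=v_0$ and $[X,Y]u_i=0$ for $i\ne k+1$; thus $[X,Y]=v_0\otimes u_{k+1}^{*}$ is nonzero with image $\C v_0$, so $[X,Y]\in\kp^\bot$. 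This is the element $Y$ required by the lemma.

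I expect the only delicate point to be the translation of the hypothesis: extracting from the $\kg^{(d)}$-stratification that ``$P.X$ not open dense in $G.X$'' forces $v_0\in\operatorname{Im} X$, and checking that a Jordan chain through $v_0$ really extends to a full basis — which is exactly the identity $\ker X^{k}=\operatorname{Im} X^{n-k}$ valid for a single Jordan block. The commutator computation and the verification that $Y$ is semisimple and non-scalar are then immediate.
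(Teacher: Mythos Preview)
Your proof is correct and follows essentially the same approach as the paper: both build a Jordan basis $(w,Xw,\dots,X^{n-1}w)$ passing through $v_0$, split $\C^n$ into the two complementary pieces determined by the position of $v_0$ in that chain, and take $Y$ to be diagonal with distinct scalars on the two pieces (the paper writes this as $\Phi_{ab}=aI_{V_1}+bI_{V_2}$; your $Y$ is the special case $a=1$, $b=0$ after matching your index $k$ with the paper's $p=n-k$). Your explicit identification of $\kp^\bot$ with $\{v_0\otimes\phi\}$ and your use of the $\kg^{(d)}$-stratification to translate the hypothesis are exactly what the paper does implicitly.
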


\begin{proof}
Let $\kg$ act on the vector space $V=\C^n$ by $(X,v)\in \kg\x V\to Xv$.
If $X$ is nilpotent regular, its Jordan form has only one block, we deduce easily the following statements:
\begin{itemize}
\item the kernel $H$ of $X^{n-1}$ is a hypersurface
\item the image of $V$ by $X$ is $H$
\item if $v\notin H$, $(v,Xv,X^2v,\dots,X^{n-1}v)$ is a basis of $V$
\end{itemize}

So, there is a unique integer  $p\in[0,\dots,n-1]$ and some $w\notin H$ such that
$v_0=X^pw$. Then $(w,Xw,\dots,v_0=X^pw,Xv_0,\dots,X^{n-p-1}v_0)$ is a basis of $V$.

If $X$ and $X'$ are two regular nilpotent matrices with the same characteristic integer $p$, the matrix
of $\Gln(\C)$ which sends $(w,Xw,\dots,v_0=X^pw,Xv_0,\dots,X^{n-p-1}v_0)$ on $(w',X'w',
\dots,v_0={X'}^pw',X'v_0,\dots,{X'}^{n-p-1}v_0)$ sends $v_0$ on itself hence is an element of $P$ which
conjugates $X$ and $X'$.

The $P$-orbits in the $G$-orbit of nilpotent regular matrices are thus given by this integer $p$. We
have $p=0$ if and only if $v_0\notin H$ hence the $P$-orbit given by $p=0$ is open in the $G$-orbit,
that is the first alternative of the lemma.

Consider now the case $p\ge 1$. Let $V_1$ be the span of $(w,Xw,\dots,X^{p-1}w)$ and $V_2$ be the span
of $(v_0,Xv_0,\dots,X^{n-p-1}v_0)$. We have $V=V_1\oplus V_2$, $XV_1\subset V_1\oplus\C v_0$ and
$XV_2\subset V_2$.

Let $(a,b)\in\C^2$, $a\neq b$ and $\gF_{ab}=aI_{V_1}+bI_{V_2}$. ($I_{V_i}$ is the identity morphism on
$V_i$). As $\gF_{ab}$ is semi-simple, we are done if we prove that $[\gF_{ab},X]$ is an element of
$\kp^\bot$. This is equivalent to the fact that $[\gF_{ab},X]$ sends any $u$ of $V$ into $\C v_0$.

Let $u=u_1+u_2$ the decomposition of $u\in V$ with $u_1\in V_1$ and $u_2\in V_2$. Let $Xu_1=w_1+\gl v_0$
with $w_1\in V_1$ and $Xu_2=w_2$ with $w_2\in V_2$. Then we have:
\begin{align*}
[\gF_{ab},X]u&=\gF_{ab}Xu_1+\gF_{ab}Xu_2-X\gF_{ab}u_1-X\gF_{ab}u_2\\
&=\gF_{ab}(w_1+\gl v_0+w_2)-aXu_1-bXu_2\\
&=aw_1+b\gl v_0+bw_2-aw_1-a\gl v_0-bw_2)=(b-a)\gl v_0
\end{align*}
\end{proof}

Consider for a while $G=Sl_n(\C)$ acting by the adjoint representation on its Lie algebra
$\sln(\C)$. The conormal to the orbit $G.X$ is the set of points
$$\ensemble{(Y,Z)\in\kg\x\kg}{[Y,Z]=0, \exists\, g\in G, Y=g.X}$$
If $Y$ is nilpotent regular, all $Z$ such that $[Y,Z]=0$ are nilpotent
and the conormal to the orbit is contained in the variety (\ref{equ:car0}).
If $X$ is nilpotent non regular, there exists always $Z$ semi-simple such that $[X,Z]=0$ and the
conormal to the orbit is {\sl not contained} in the variety
(\ref{equ:car0}).

Consider again $G=Gl_n(\C)$ acting on $\gln(\C)$. In the stratification $\left(\widetilde S_{(\gp,\kO)}\right)$, the stratum of a
nilpotent $X$ is the direct sum of the orbit $G.X$ and of the center $\kc$ of $\kg$. The conormal to the stratum of $X$ is the direct
sum of the center of $\kg$ and of the conormal to the orbit in $\sln(\C)$. So, the conormal to
the stratum of $X$ is contained in the set  (\ref{equ:car0}) if and only if $X$ is regular nilpotent.

Let  $P$ be as before the stability subgroup of $v_0\in\C^n$. The same calculation than the proof
of lemma \ref{lema} shows that the conormal to the $P$-orbit is the set
$$\ensemble{(Z,Y)\in\kg\x\kg}{Z=g.X, g\in P, \textrm{ and }[Z,Y]\in\kp^\bot}$$
while the conormal to the stratum of $X$ is the set
$$\ensemble{(Z,Y)\in\kg\x\kg}{Z=g.X+X_0, g\in P, X_0\in\kc, Y\notin\kc, [Z,Y]\in\kp^\bot}$$
This set is contained in the characteristic variety of $\MFP$ that is the set  (\ref{equ:cark}) if and
only if all non nilpotent points commuting with $X$ are in the center $\kc$.

So we have three options:

1) If the $P$-orbit of $X$ is dense in the $G$-orbit this means that the tangent vector fields
are the same hence that $\MFP$ and $\MFG$ are isomorphic in a neighborhood of $X$. As $\MFG$ is
tame (\cite[corollary 1.6.3]{GL}) the same is true for  $\MFP$.

2) If $X$ is nilpotent regular and the orbit $P.X$ is not dense in $G.X$, lemma \ref{lemb} shows that the
conormal to the stratum of $X$ is not contained in the characteristic variety of $\MFP$.

3) If $X$ is nilpotent non regular, the stratum of $X$ is not contained in the characteristic variety of $\MFP$
because the same was true for $\MFG$.

We proved:

\begin{cor} \label{propa}
Let $X$ be a nilpotent point of $\kg$. If the conormal to the direct sum of the center of $\kg$ and of the $P$-orbit
is contained in the characteristic variety of  $\MFP$, then $\MFP$ is isomorphic to $\MFG$ near $X$ and $\MFP$ admits a tame $b$-function.
\end{cor}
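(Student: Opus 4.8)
The plan is to prove the corollary by a trichotomy on the nilpotent element $X$, according to whether (a) $P.X$ is dense in $G.X$; (b) $X$ is regular nilpotent while $P.X$ is not dense in $G.X$; or (c) $X$ is non-regular nilpotent. These cases are exhaustive, and I would show that the hypothesis of the corollary forces case (a), in which both conclusions are immediate.

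First I would treat case (a). If $P.X$ is dense in $G.X$ then $P.X$ is open in $G.X$, so on a neighbourhood of $X$ the vector fields $\gt(\kp)$ and $\gt(\kg)$ span the same module; hence $\CJ_F$ and $\CI_F$ coincide near $X$ and $\MFP\simeq\MFG$ there. Moreover, near $X$ the $P$-stratum $P.X\oplus\kc$ coincides with the $G$-stratum $G.X\oplus\kc$ (their difference is a closed set not containing $X$, the other $P$-orbits in $G.X$ having strictly smaller dimension), so the tame quasi-$b$-function carried by $\MFG$ along its stratum through $X$ — which exists because $\MFG$ is tame, \cite[corollary 1.6.3]{GL} — transports through the isomorphism to a tame $b$-function for $\MFP$. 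This gives both assertions, so it remains only to exclude (b) and (c) under the hypothesis.

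For that I would use the description of the conormal recorded just before the statement (the same computation as in Lemma \ref{lema}): the stratum through $X$ is the direct sum of the centre $\kc$ and the $P$-orbit $P.X$ — its identification with $S_{(\gp,\kO,p)}$ uses that the $P$-orbits inside $G.X$ are the level sets of $d(\cdot,v_0)$ — and its conormal contains every pair $(X,Y)$ with $Y\in\kc^\bot$ and $[X,Y]\in\kp^\bot$, while such a pair lies in the characteristic variety (\ref{equ:cark}) of $\MFP$ only when $Y$ is nilpotent. So, to contradict the hypothesis in either of cases (b), (c) it suffices to exhibit a non-nilpotent $Y\in\kc^\bot$ with $[X,Y]\in\kp^\bot$; any non-zero semi-simple $Y$ works, and $Y$ may be forced into $\kc^\bot=\sln(\C)$ by subtracting a scalar matrix, which changes neither $[X,Y]$ nor semi-simplicity nor the non-vanishing of $Y$. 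In case (b), Lemma \ref{lemb} provides exactly such a $Y$: semi-simple, not in the centre of $\kg$, with $[X,Y]\in\kp^\bot$. In case (c), a non-regular nilpotent $X$ has at least two Jordan blocks on $\C^n$, so the matrix acting as a scalar on each Jordan block, with the scalars not all equal, is semi-simple, non-central and commutes with $X$, giving $[X,Y]=0\in\kp^\bot$ — the same obstruction already present for $\MFG$. In both cases the hypothesis is contradicted, so only case (a) survives and the corollary follows. I expect the only delicate step to be the conormal book-keeping of this last paragraph — matching ``the conormal to the direct sum of the centre and the $P$-orbit'' with the conormal to $S_{(\gp,\kO,p)}$ and extracting the nilpotency criterion from (\ref{equ:cark}) — since all the genuinely new input sits in Lemma \ref{lemb} and in the elementary Jordan-block computation.
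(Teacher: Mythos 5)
Your argument is correct and matches the paper's proof essentially step for step: the same trichotomy (dense $P$-orbit; regular nilpotent with non-dense $P$-orbit, excluded via Lemma \ref{lemb}; non-regular nilpotent, excluded via a non-central semisimple element commuting with $X$), leading to the same conclusion that the hypothesis forces the dense case. The only places you are more explicit than the paper — writing $Y\in\kc^\bot$ rather than the paper's (typographically garbled) $Y\notin\kc$ in the conormal description, adjusting $Y$ by a scalar to land in $\sln(\C)=\kc^\bot$, and spelling out the block-scalar construction in the non-regular case — are correct refinements, not a different route.
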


This was proved for $G=\Gln(\C)$ but extends immediately to the case where $G$ is a product $\prod Gl_{n_k}(\C)$

By the isomorphism $\gF^*$ of section \ref{sec:equiv}, this result gives an analogous result for $\NFG$
and in the next two sections we will consider the case of $\NFG$.

\subsection{Commutative algebra}\label{sec:comm}

As a second step of the proof, we assume that the rank of $[\kg,\kg]$ is $0$ which means that
$\kg$ is commutative. Hence $G=(\C^*)^N$ acting on $\C^n$ by componentwise multiplication. Then the
action of $G$ on $\kg\x V=\C^n\x\C^n$ is the multiplication on the second factor.

\begin{lem}
If $G=(\C^*)^N$ the module $\NFG$ is holonomic and tame.
\end{lem}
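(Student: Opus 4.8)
The plan is to analyze the very explicit situation when $G=(\C^*)^N$. Here the Lie algebra $\kg=\C^n$ is commutative, so $\gt(\kg)$ is spanned by the vector fields $\gt_{\kg\x V}(e_k)$ for the standard basis $e_k$ of $\kg$; by formula (\ref{formul2}), since the adjoint action on the commutative algebra is trivial and the action on $V=\C^n$ is componentwise multiplication, $\gt_{\kg\x V}(e_k)$ has value $(0,e_k u_k)$ at $(X,u)$, i.e. it is the Euler-type vector field $u_kD_{u_k}$ in the $V$-coordinates (up to sign). Thus $\NFG=\CaD_{\kg\x V}/\CJ$ where $\CJ$ is generated by $F$ (differential operators in the $\kg$-variables only, whose symbols cut out the nilpotent cone of $\kg^*$) together with the $N$ commuting operators $u_kD_{u_k}$, $k=1,\dots,N$. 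Since for commutative $\kg$ the only $G$-invariant polynomials generating $S_+(\kg)^G$ already give the whole maximal idead at $0$, the nilpotent cone of $\kg^*$ is just the point $\{0\}$, so a power of the maximal ideal of $0$ lies in $\gs(F)$; concretely $F$ contains, after taking enough products, all sufficiently high order constant-coefficient operators $D_X^\alpha$ in the $\kg$-variables.

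First I would compute the characteristic variety. The symbols of $u_kD_{u_k}$ are $u_k\eta_k$ (with $(v,\eta)$ dual to the $V$-variables $(u,\xi_{\mathrm{here}})$ — I will write $\eta$ for the cotangent coordinate on $V$) and the symbols of $F$ cut out $\xi=0$ in the $\kg$-cotangent directions (where $\xi$ is the cotangent coordinate along $\kg$). Hence $\CHM$ is contained in $\{(X,u,\xi,\eta): \xi=0,\ u_k\eta_k=0\ \forall k\}$. This is exactly the conormal variety to the normal crossing arrangement $\{u_1=0\}\cup\dots\cup\{u_N=0\}$ inside $\kg\x V$, crossed with the zero section in the $\kg$-directions: for each subset $I\subset\{1,\dots,N\}$ the stratum $\{u_i=0\ (i\in I),\ u_k\ne0\ (k\notin I)\}$ has conormal $\{u_i=0,\eta_k=0\ (k\notin I),\xi=0\}$, and the union over $I$ is precisely the set above. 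In particular $\dim\CHM=2n=\dim(\kg\x V)$, so $\NFG$ is holonomic, and the characteristic variety is contained in the union of conormals to the strata of a normal-crossing stratification. So this falls into case (c) of the list after Proposition \ref{prop:strat}: $\NFG$ is (a power of, or a quotient/submodule built from) the $\CaD$-module of a normal crossing divisor.

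Next I would check tameness directly. Along each coordinate hyperplane-type stratum $Z_I=\{u_i=0,i\in I\}$ of codimension $p=|I|$, the operator $\theta=\sum_{i\in I}u_iD_{u_i}$ itself lies in the ideal $\CJ$ (it is a sum of generators $u_iD_{u_i}$), so $b(\theta)=\theta$ is a $b$-function, whose unique root is $0$. Since $0>-p$ for every $p\ge1$, this is a tame $b$-function (a tame quasi-$b$-function with all weights $1$) along every stratum where the conormal could meet $\CHM$. For the open stratum (all $u_k\ne0$) there is nothing to prove. Hence $\NFG$ satisfies Definition \ref{def:quasitame}: it is holonomic and tame. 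The main obstacle — really the only place one must be slightly careful — is bookkeeping the reduction of $\gt(\kg)$ to the Euler operators $u_kD_{u_k}$ and confirming that $F$ restricted to this situation contributes only the vanishing condition $\xi=0$ in the $\kg$-cotangent directions (so that the non-trivial part of $\CHM$ genuinely lives over the normal crossing divisor in $V$ and nothing unexpected happens in the $X$-directions); once that is in hand, holonomicity and tameness are immediate from the normal-crossing model and the explicit $b$-functions $\theta=\sum_{i\in I}u_iD_{u_i}$.
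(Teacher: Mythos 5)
Your proof is essentially identical to the paper's: you identify the vector fields as the Euler operators, observe that the symbols of $F$ cut out $\xi=0$ so the characteristic variety sits inside the conormal variety to the normal crossing divisor $\{u_1\cdots u_n=0\}$ in the $V$-factor, deduce holonomicity from the dimension count, and produce the tame $b$-function $\sum_{i\in I}u_iD_{u_i}$ with root $0>-|I|$ along each stratum. The paper's argument is the same in both structure and content, just phrased with coordinates $(x,y)$ rather than $(X,u)$.
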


\begin{proof}
Let us fix coordinates $(x_1,\dots,x_n;y_1,\dots y_n)$ of $\kg\x V=\C^n\x\C^n$. The orbits of $G$
on $\kg\x V$ are given by the components of the normal crossing divisor $\{y_1y_2\dots y_n=0\}$ and
the vector fields tangent to the orbits are generated by $y_1D_{y_1},y_2D_{y_2},\dots y_nD_{y_n}$.

On the other hand, the set $F$ is a set of differential operators on $\kg$ whose principal symbols
define the zero section of the cotangent space to $\kg$. So the characteristic variety of the module $\NFG$ is the set:
\begin{equation*}
\ensemble{(x,y,\gx,\gh)\in T^*(\C^n\x\C^n)}{\gx_1=\dots=\gx_n=0,\quad y_1\gh_1=\dots=y_n\gh_n=0}
\end{equation*}
and the module is holonomic.

Define a stratification of $\C^n\x\C^n$ by the sets $\C^n\x S_\ga$ where the sets $S_\ga$ are the
smooth irreducible components of $\{y_1\dots y_n=0\}$ that is the sets $S_p=\{y_1=\dots =y_p=0,\ y_{p+1}\dots y_n\ne0\}$
and all the sets deduced by permutation of the $y_i$'s.

The characteristic variety of $\NFG$ is contained in the union of the conormals to the strata and
the operator $y_1D_{y_1}+y_2D_{y_2}+\dots+y_p D_{y_p}$ is a $b$-function for $S_p$ which is tame by
definition. So the module $\NFG$ is tame.
\end{proof}

\begin{defin}
If $\gS$ is a normal crossing divisor on a manifold $\gO$, we denote by $\CB_\gS$ the $\CaD$-module quotient
of $\DO$ by the ideal generated by the vector fields tangent to $\gS$.
\end{defin}

As the principal symbols of the differential operators of $F$ defines the zero section of the cotangent
space to $\kg$ the $\Dg$-module $\Dg/\Dg F$ is isomorphic to a power of $\CO_\kg$ \cite{BJORK} and $\NFG$
is isomorphic to a power of the module $\CB_\gS$ associated to $\{y_1\dots y_n=0\}$.

\subsection{Proof of the main theorem}

We will now prove theorem \ref{thm:main} by induction on the dimension of the semi-simple Lie algebra $[\kg,\kg]$.
More precisely, we will show the corresponding theorem for $\NFG$ which we know to be equivalent.

If the dimension $p$ of $[\kg,\kg]$ is $0$, the result has been proved in section \ref{sec:comm}. So we may
assume that $p$ is positive and that the result has been proved when the dimension is strictly lower than $p$.

Let $X=S+N$ be the Jordan decomposition of a point $X\in\kg$. If $S=0$ that is if $X$ is nilpotent, it
has been proved in section \ref{sec:nilpot} that the module  $\NFG$ is weakly tame along the stratum
going throw $X$ that is the orbit of $X$ plus the center.

So we may assume that $S\ne 0$ and consider the algebra $\kg^S$ that is the commutator of $S$. As $S$ is not
zero, $\kg^S$ is a reductive Lie algebra which is a direct sum of algebras $\kg\kl_{n_k}$. As the dimension
of  $[\kg^S,\kg^S]$ is strictly lower than $p$ the result is true for $\kg^S$.

We apply theorem \ref{thm:reg} to get a submersive map $\gY:G\x \km''\x V\to \kg\x V$ such that
 $\gY^*\NFG$ is equal to $\CO_G\hat\ox\CN_{F'\!\!,\,\km}$. Here $\km''$ is an open subset of
$\kg^S$ hence by the induction hypothesis $\CN_{F'\!\!,\,\km}$ is weakly tame and thus
$\gY^*\NFG$ is weakly tame.

As $\gY$ is submersive, this implies that $\NFG$ itself is weakly tame in a neighborhood of $S$. As it was
 remarked in the proof of \cite[Proposition 3.2.1.]{GL}, the stratum of $X=S+N$ meets any neighborhood of $S$ hence
the result is true in a neighborhood of $X$. This concludes the proof.

The hypersurface $\gS$ of $\kg$ was defined in remark \ref{rem:sigma} and by definition $\MFP$ is isomorphic to
$\MFG$ on $\kg\setminus \gS$.

\begin{prop}\label{prop:normal}
On the set $\kg_{rs}$ of regular semi-simple points, $\gS$ is a normal crossing divisor and $\MFP$ is isomorphic to a power of $\CB_\gS$.
\end{prop}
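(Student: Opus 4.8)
The plan is to pin down both $\gS$ and $\MFP$ explicitly in a neighbourhood of a regular semi-simple point: the geometry will come from the Vandermonde determinant, and the $\CaD$-module from the reduction to a Cartan subalgebra (theorem \ref{thm:reg}) combined with the commutative case of \S\ref{sec:comm}.

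First I would describe $\gS$ on $\kg_{rs}$. Over $\kg_{rs}$ the eigenvalues $\gl_1(X),\dots,\gl_n(X)$ of $X$ are locally holomorphic and distinct, and one may choose holomorphic eigenvectors $e_1(X),\dots,e_n(X)$ (normalised by a fixed linear form) and write $v_0=\sum_ic_i(X)e_i(X)$ with $c_i$ holomorphic. In the frame $(e_i(X))$ the matrix $(v_0,Xv_0,\dots,X^{n-1}v_0)$ equals $\mathrm{diag}(c_1(X),\dots,c_n(X))$ times the Vandermonde matrix of $(\gl_i(X))$, whose determinant is nonvanishing; hence $\det(v_0,Xv_0,\dots,X^{n-1}v_0)$ is a unit times $\prod_ic_i(X)$, so locally $\gS\cap\kg_{rs}=\bigcup_{i=1}^n\{c_i=0\}$ and $d(X,v_0)=\#\{i:c_i(X)\ne0\}$. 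Now fix $X_0\in\gS\cap\kg_{rs}$; replacing $(X_0,v_0)$ by a $G$-conjugate (which changes neither $\MFP$ nor the assertion) we may take $X_0=\mathrm{diag}(\gl_1,\dots,\gl_n)$ and $v_0=(c_1,\dots,c_n)$ with $c_1=\dots=c_r=0\ne c_{r+1},\dots,c_n$; since $v_0\ne0$, $r\le n-1$. A first order perturbation computation (differentiating the spectral projectors $\frac1{2\gp i}\oint(z-X)^{-1}dz$) gives, for $i\le r$,
\[
dc_i|_{X_0}=\sum_{j\,:\,c_j\ne 0}\frac{c_j}{\gl_i-\gl_j}\,dX_{ij},
\]
a nonzero linear form (here $r\le n-1$ forces some $c_j\ne0$) involving only the entries of the $i$-th row. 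For distinct $i,i'\le r$ these forms live in disjoint coordinates, so $dc_1|_{X_0},\dots,dc_r|_{X_0}$ are independent; thus the $\{c_i=0\}$, $i\le r$, are smooth and meet transversally, and — the determinant above being a unit times $c_1\cdots c_r$ near $X_0$ — $\gS$ is a reduced normal crossing divisor on $\kg_{rs}$.

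For the module statement, on $\kg\setminus\gS$ we have $\MFP\simeq\MFG$ (remark \ref{rem:sigma}); and over $\kg_{rs}$ the characteristic variety of $\MFG$, namely $\ensemble{(X,Y)}{Y\in\kN,[X,Y]=0}$, reduces to the zero section (a nilpotent element commuting with a regular semi-simple one is $0$), so $\MFG|_{\kg_{rs}}$ is an $\CO_\kg$-coherent $\CaD$-module, locally $\CO^{\oplus m}$ with $m$ its generic rank (locally constant, hence constant on the connected set $\kg_{rs}$); thus $\MFP\simeq\CB_\gS^{\oplus m}$ on $\kg_{rs}\setminus\gS$. Along $\gS$ I would argue locally: near $X_0$ as above $\kg^{X_0}=\kh$ is a commutative Cartan subalgebra, so theorem \ref{thm:reg} with $S=X_0$, together with the isomorphism $\NFG\simeq\gF^*\MFP$ of proposition \ref{prop:iso3}, reduces the computation of $\MFP$ near $X_0$ to that of $\CN_{F',\km}$ on an open subset of $\km''\x V$ (with $\km=\kh$), which by the commutative case of \S\ref{sec:comm} is a power $\CB_{\gS'}^{\oplus m}$ of the module of the normal crossing divisor $\gS'=\{y_1\cdots y_n=0\}$. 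On $\kg_{rs}$ only the strata with $\gp=\emptyset$, $\kO=\{0\}$ occur, so they are the sets $\{d(X,v_0)=p\}$; using $\gF^{-1}S_{(\gp,\kO,p)}=T_{(\gp,\kO,p)}$ and that $\gY$ preserves $d(X,v)$, the divisor $\gS'$ is carried precisely to $\gS$, and since every map involved is a submersion and the modules $\CaD/(\text{ideal of vector fields tangent to a normal crossing divisor})$ are compatible with submersive inverse images, the description descends to $\MFP\simeq\CB_\gS^{\oplus m}$ near $X_0$; glued with the case $\kg_{rs}\setminus\gS$ this gives the statement.

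I expect the main obstacle to be precisely this last point: tracking, through the two submersions $\gF$ (section \ref{sec:equiv}) and $\gY$ (theorem \ref{thm:reg}), which normal crossing divisor the commutative case produces, and verifying that "a power of $\CB$ of that divisor" descends correctly to "a power of $\CB_\gS$" on $\kg_{rs}$ — the $\CaD$-module bookkeeping — the geometry in the first two steps being elementary by comparison.
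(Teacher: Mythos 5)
Your proof is correct and follows essentially the same route as the paper: near a regular semi-simple point you apply theorem~\ref{thm:reg} to reduce to the Cartan subalgebra $\km=\kh$, invoke the commutative case of \S\ref{sec:comm}, and transport the normal crossing module back through the submersions $\gY$ and $\gF$ (via proposition~\ref{prop:iso3}). The explicit Vandermonde/spectral-projector computation in your first paragraph is a nice direct verification that $\gS$ is normal crossing on $\kg_{rs}$, but it is not strictly needed: the paper obtains this as a byproduct of the inverse-image argument, since the pullback of $\{y_1\cdots y_n=0\}$ by a submersion is again a normal crossing divisor.
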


\begin{proof}
If $S$ is a regular semi-simple point, $\kg^S$ is a Cartan subalgebra of $\kg$ and the results of
\S\ref{sec:comm} may be applied. The module $\NFG$ is thus the inverse image by a submersion of a
power of the module associated to the normal crossing divisor $\{y_1y_2\dots y_n=0\}$. Hence $\NFG$
and by the isomorphism of \S\ref{sec:equiv} $\MFP$ are powers of the module associated to a normal
crossing divisor.
\end{proof}

The variety $\gS$ is the set of matrices $X$
such that $v_0,Xv_0,\dots,X^{n-1}v_0$ are linearly dependent. For example, if $v_0=(0,\dots,0,1)$, the equation
of $\gS$ is given by the determinant obtained by taking the last row of $I,X,\dots,X^{n-1}$.

\subsection{The $Sl_n(\C)$ case}

We consider $\sln(\C)$ as a component of the direct sum $\gln(\C)=\sln(\C)\oplus\C$.
When $Gl_n(\C)$ acts on $\gln(\C)$ the action is trivial on the center $\kc\simeq \C$ hence the set of vector fields
$\gt(\kg)$ are in fact defined on $\sln(\C)$ and are identical to the vectors induced by the action of $Sl_n(\C)$.
In the same way, if $P$ is the stability group in $ Gl_n(\C)$ of $v_0\in\C^n$ and $P'$ the stability group in
$Sl_n(\C)$ of the corresponding point of $\P_{n-1}(\C)$, $P'$ is the image of $P$ under the map $X\mapsto (det X)^{-1}X$.
So they define the same vector fields on $\sln(\C)$.

Let $F_0$ be the set of all vector fields on $\kc$.
If $F'$ is a (H-C)-type subset of $\Dgg$ for $\kg=\sln(\C)$, the set $F=F'\otimes F_0$ is a (H-C)-type for $Gl_n(\C)$ and we have
$$\MFP=\CM_{F'\!,\,\kp'}\otimes\CO_\kc$$

So the theorem \ref{thm:main} for $\Gln(\C)$ induces immediately the same theorem for $Sl_n(\C)$. The same
argument works for a product of copies of $\Gln(\C)$ and $Sl_n(\C)$.

Remark that a (H-C)-type subset of $\Dgg$ for $\kg=\gln(\C)$ is not the product of a  (H-C)-type subset
for $\sln(\C)$ by $F_0$ so we could not deduce the result for $\Gln(\C)$ from the corresponding result for
$Sl_n(\C)$. For the same reason if theorem  \ref{thm:main} is true for two Lie algebras this does not immediately
implies the result for their direct sum.

\subsection{Application to real forms}

Let $\kg_\R$ be $\sln(\R)$, $\gln(\R)$, $\sln(\C)$ or $\gln(\C)$ and $\kg$ be a complexification of $\kg_\R$, that is
$\sln(\C)$, $\gln(\C)$, $\ks\kl_{2n}(\C)$ or $\kg\kl_{2n}(\C)$ respectively. Let $\gS_\R$ be the intersection
of $\kg_\R$ with the variety $\gS$ of remark \ref{rem:sigma} and proposition \ref{prop:normal}.

If $U$ is an open subset of $\kg_\R^{rs}$, the set of semisimple regular points of $\kg_\R$,
$\gS_\R$ divides $U$ into a finite number of connected components $U_1,\dots,U_N$. Let $Y_i$
be the characteristic function of the open set $U_i$.

\begin{lem}
Let $U$ be a simply connected open subset of $\kg_\R^{rs}$. Any distribution $T$ solution on $U$ of $\MFP$
is equal to a finite sum $\sum f_i(x)Y_i(x)$ where $f_i$ is an analytic function defined on $U$ and solution of $\MFG$.
\end{lem}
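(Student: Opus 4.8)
The plan is to combine the local structure of $\MFP$ on $\kg_\R^{rs}$ (Proposition \ref{prop:normal}) with the already-known regularity of solutions of $\MFG$ (from \cite[cor 1.6.3]{GL}) and the fact that $\MFP$ and $\MFG$ agree off $\gS$ (remark \ref{rem:sigma}). First I would recall that, by Proposition \ref{prop:normal}, on the complexification $\kg^{rs}$ the module $\MFP$ is isomorphic to a power $\CB_\gS^{\oplus r}$ of the $\CaD$-module attached to the normal crossing divisor $\gS$; passing to the real form, a distribution solution on $U$ of $\MFP$ is annihilated by the corresponding real vector fields tangent to $\gS_\R$. Since $\gS_\R$ is (locally, on the simply connected set $U$) a union of smooth hypersurfaces in normal crossing position, the classical description of distributions annihilated by the Euler-type vector fields of a normal crossing divisor — i.e. solutions of $\CB_\gS$ — says that on each chamber such a distribution is, componentwise, an analytic function and the transverse coefficients are constants; hence $T$ restricted to $U\setminus\gS_\R$ is analytic and, because the only ``singular'' contributions allowed along $\gS_\R$ are Dirac-type terms whose presence is forbidden by weak tameness (Theorem \ref{thm:lun}, applied locally to $\MFP$ which is weakly tame by Theorem \ref{thm:main}), $T$ has no component supported on $\gS_\R$. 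Therefore $T$ is a locally $L^1$ function whose restriction to $U\setminus\gS_\R$ is analytic.

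Next I would invoke remark \ref{rem:sigma}: on $\kg\setminus\gS$ the vector fields $\gt(\kp)$ and $\gt(\kg)$ coincide, so on $U\setminus\gS_\R$ the distribution (now known to be an analytic function there) is a solution of $\MFG$. Write $U\setminus\gS_\R = U_1\cup\dots\cup U_N$ for the connected components, and let $f_i$ be the restriction of $T$ to $U_i$; each $f_i$ is analytic and solves $\MFG$ on $U_i$. The only remaining point is to extend each $f_i$ to an analytic solution of $\MFG$ on all of $U$: since $\MFG$ is tame, its solutions on the simply connected open set $U\subset\kg_\R^{rs}$ are locally $L^1$ and in fact analytic on the regular set (again \cite[cor 1.6.3]{GL}), and $U$ consists entirely of regular semisimple points, so solutions of $\MFG$ on $U$ are honest analytic functions; the germ $f_i$ defined on the chamber $U_i$ propagates along $U$. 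Thus $T = \sum_i f_i(x) Y_i(x)$ as an identity of distributions on $U$, with each $f_i$ analytic on $U$ and solving $\MFG$.

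The main obstacle — and the step deserving the most care — is controlling the behavior of $T$ \emph{across} $\gS_\R$: a priori $T$ could carry distributional terms supported on the hypersurface $\gS_\R$ (derivatives of Dirac masses along the sheets), which would prevent writing it as a plain sum $\sum f_i Y_i$. This is exactly what weak tameness rules out: by Theorem \ref{thm:main} the module $\MFP$ is holonomic and weakly tame, and by Theorem \ref{thm:lun} a holonomic weakly tame module has no distribution solution supported in a hypersurface; applied locally near a generic point of each sheet of $\gS_\R$ this forces the transverse jump coefficients of $T$ to be the benign ones (step functions $Y_i$, not $\delta$'s), which is consistent with the $\CB_\gS$-structure from Proposition \ref{prop:normal}. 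A secondary technical point is that the isomorphism $\MFP\simeq\CB_\gS^{\oplus r}$ is established over the complexification, so one must check it descends compatibly to real distribution solutions on $U$; this is routine since the vector fields $\gt(\kp)$ and the operators of $F$ are defined over $\R$ and $U$ is an open subset of the real form, so real distribution solutions of $\MFP$ are precisely those killed by these real operators, to which the structural description applies verbatim.
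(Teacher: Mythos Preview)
Your argument is correct and lands on the same three ingredients the paper uses: analyticity of $T$ on each chamber $U_i$, extension of $f_i=T|_{U_i}$ to an analytic solution of $\MFG$ on all of $U$ via simple connectedness, and weak tameness to exclude any residual piece supported on $\gS_\R$. The paper's proof is organized a bit more economically, however. It does not invoke Proposition~\ref{prop:normal} or the $\CB_\gS$ structure at all: it goes straight to remark~\ref{rem:sigma} to identify $\MFP$ with $\MFG$ on $U\setminus\gS_\R$, and then uses the fact (from \cite{GL}) that $\MFG$ is \emph{elliptic} on $\kg_\R^{rs}$ to conclude at once that $T|_{U_i}$ is analytic and, since analytic solutions of a holonomic elliptic system form a locally constant sheaf, extends over the simply connected $U$. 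Your opening detour through the explicit solution theory of $\CB_\gS$ is therefore redundant---once you know $T$ solves the elliptic system $\MFG$ on each $U_i$, analyticity is immediate---but it is not wrong, and it does make the transverse structure across $\gS_\R$ visible. One small point: the property you need for the extension step is ellipticity of $\MFG$ on $\kg_\R^{rs}$, not \cite[cor.~1.6.3]{GL} (which is the global $L^1_{loc}$ statement); the paper cites \cite{GL} for ellipticity separately.
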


\begin{proof}
On $U\setminus \gS_\R$, $\MFP$ is isomorphic to $\MFG$ hence $T$ is a solution of $\MFG$. By \cite{GL}, we know
that $\MFG$ is elliptic on $\kg_\R^{rs}$. Thus for each connected component $U_i$, $T|_{U_i}$ is an analytic
function solution of $\MFG$. Hence it extends to a solution of  $\MFG$ on the whole of $U$.

This shows that $T$ is equal to  $\sum f_i(x)Y_i(x)$ plus a distribution $S$ supported by $\gS_\R$. But $\MFP$
is weakly tame hence has no solutions supported by a hypersurface. So $S=0$ and  $T=\sum f_i(x)Y_i(x)$
\end{proof}

Let us now prove theorem \ref{thm:main2}.

Let $T$ be a distribution on an open subset of $\kg_\R$ which is solution of $\MFP$ and invariant under $P_\R$.
By the previous lemma the restriction of $T$ to  $\kg_\R^{rs}$ is a sum $\sum f_i(x)Y_i(x)$ where $f_i$ is an analytic
function defined on $U$ and solution of $\MFG$. But on the complement of $\gS_\R$ the orbits of $P_\R$ and $G_\R$ are
the same. Hence if $T$ is invariant under $P_\R$ all functions $f_i$ are equal and $T$ is an analytic solution of $\MFG$.

By  \cite[corollary 1.6.3]{GL}, $T|_{\kg_\R^{rs}}$ extends to a $L^1_{loc}$ function $T'$ on $\kg_\R$ solution of
$\MFG$. The distribution $S=T-T'$ is a distribution solution of $\MFP$ supported by the hypersurface
$\kg_\R\setminus\kg_{\R,rs}$ hence vanishes. This shows that $T$ is a  $L^1_{loc}$-function on $\kg_\R$ solution
of $\MFG$, hence that $T$ is $G$-invariant.

\begin{rem}
Solutions of $\MFP$ which are not globally invariant by $P_\R$ may not be solution of $\MFG$. As an
example, in the case of $\sld$ with the notations of \S\ref{sec:sld} the Heaviside function $Y(z)$
equal to $0$ if $z<0$ and to $1$ if $z\ge0$ is a solution of $\MFP$ but not of $\MFG$.
\end{rem}
\providecommand{\bysame}{\leavevmode\hbox to3em{\hrulefill}\thinspace}
\providecommand{\MR}{\relax\ifhmode\unskip\space\fi MR }
\providecommand{\MRhref}[2]{%
  \href{http://www.ams.org/mathscinet-getitem?mr=#1}{#2}
}
\providecommand{\href}[2]{#2}

\vespa\vespa

\enddocument

\end